\newtheorem{definition}{Definition}[section]
\newtheorem{theorem}[definition]{Theorem}%[section]
\newtheorem{proposition}[definition]{Proposition}%[section]
\newtheorem{lemma}[definition]{Lemma}%[section]
\theoremstyle{remark}
\newtheorem{remark}[definition]{Remark}%[section]
\numberwithin{equation}{section}
\DeclareMathOperator{\diam}{diam}
\newcommand{\f}{\mathrm{f}}
\newcommand{\intd}{\,\mathrm{d}}
\newcommand{\R}{\mathbb{R}}
\newcommand{\PLOD}{P_\mathrm{LOD}}
\newcommand{\VLOD}{V_\mathrm{LOD}}
\newcommand{\uLOD}{u_\mathrm{LOD}}
\newcommand{\LOD}{\mathrm{LOD}}
\newcommand{\lambdaLOD}{\lambda_\mathrm{LOD}}
\newcommand{\quotes}[1]{``#1''}
\begin{document}

\begin{center}
{\LARGE On optimal convergence rates for discrete minimizers of the Gross-Pitaevskii energy in LOD spaces
 \hspace{-3pt}\footnote{The authors acknowledge the support by the G\"oran Gustafsson foundation.}}\\[2em]
\end{center}

\begin{center}
{\large Patrick Henning\footnote[1]{Department of Mathematics, Ruhr University Bochum, DE-44801 Bochum, Germany, \\ e-mail: \textcolor{blue}{patrick.henning@rub.de}.} and 
Anna Persson\footnote[2]{Department of Mathematics, KTH Royal Institute of Technology, SE-100 44 Stockholm, Sweden., \\ e-mail: \textcolor{blue}{apersson@it.uu.se}.}}\\[2em]
\end{center}

\begin{abstract}
In this paper we revisit a two-level discretization based on the Localized Orthogonal Decomposition (LOD). It was originally proposed in [P.Henning, A.M{\aa}lqvist, D.Peterseim. SIAM J. Numer. Anal. 52(4):1525--1550, 2014] to compute ground states of Bose-Einstein condensates by finding discrete minimizers of the Gross--Pitaevskii energy functional. The established convergence rates for the method appeared however suboptimal compared to numerical observations and a proof of optimal rates in this setting remained open. In this paper we shall close this gap by proving optimal order error estimates for the $L^2$- and $H^1$-error between the exact ground state and discrete minimizers, as well as error estimates for the ground state energy and the ground state eigenvalue. In particular, the achieved convergence rates for the energy and the eigenvalue are of $6$th order with respect to the mesh size on which the discrete LOD space is based, without making any additional regularity assumptions. These high rates justify the use of very coarse meshes, which significantly reduces the computational effort for finding accurate approximations of ground states. In addition, we include numerical experiments that confirm the optimality of the new theoretical convergence rates, for both smooth and discontinuous potentials.
\end{abstract}

\section{Introduction}
In this paper we consider the stationary Gross-Pitaevskii equation (GPE), which can be written as an eigenvalue problem with eigenvector nonlinearity seeking $u\in H^1_0(\Omega)$ with $\| u\|_{L^2(\Omega)}=1$ and $\lambda \in \R$ such that
\begin{align*}
-\Delta u + Vu + \beta|u|^2u &= \lambda u.
\end{align*}
Here, $\Omega \subset \R^d$ (with $d=1,2,3$) is a computational domain, $V(x)$ models a trapping potential and $\beta \in \R_{\ge 0}$ is a constant. The perhaps most prominent application of the GPE is the modeling of so-called Bose--Einstein condensates \cite{Bos24,Ein24,Gro61,Pit61}. Bose--Einstein condensates (BECs) are formed when a dilute gas of bosons is cooled to ultra-low temperatures so that almost all particles occupy the same quantum state, i.e. they become indistinguishable from each other and behave like one giant \quotes{macro particle}. The first experimental realization of a BEC goes back to the Nobel prize-winning works by Cornell, Ketterle and Wieman \cite{AEM95,DMA95}. Bose--Einstein condensates are of great interest in the study of macroscopic quantum effects such as superfluidity \cite{ARV01,MAH99} and the closely related phenomenon of superconductivity \cite{Hashimoto20}.

In the context of BECs, $\beta \ge 0$ models a repulsion parameter that depends on the number of bosons, as well as their type and mass. In this setting, the GPE has a positive spectrum, where the smallest eigenvalue is simple. This eigenvalue $\lambda$ is called the ground state eigenvalue of the problem and it describes the chemical potential of a BEC in the lowest energy state. The corresponding ground state eigenfunction $u$ is unique up to its sign. An equivalent characterization of the ground state $u$ is to find a minimizer of an energy functional $E$ (defined in equation \eqref{definition-energy-functional} below) over the space of functions in $H^1_0(\Omega)$ with the normalization constraint $\| u \|_{L^2(\Omega)}=1$.

Due to the complexity of physical experiments, computing the ground sate numerically is a very important task. Starting from the energy minimizing perspective, a numerical method has two essential components. The first component is an iterative method that allows to find minimizers of the energy $E$ on the constrained manifold. Here there exists a variety of well-established schemes, where we exemplarily refer to methods based on self consistent field iterations (SCF) \cite{CaL00,CaL00B,DiC07,UJR21}, methods based on discrete Sobolev gradient flows / Riemannian optimization \cite{ALT17,AnD14,APS21,BD04,BWM05,DaK10,DaP17,HeP20,HSW21,KaE10,Zha21} or the \quotes{$J$-method} \cite{JKM14,AHP21b}. We shall however not further discuss this aspect in our paper. Instead, we shall focus on the second important component of any numerical method for the computation of ground states: the choice of a suitable discrete space in which the energy functional is minimized. Standard choices are Lagrange finite element spaces (which can be often associated with finite difference discretizations) or Fourier spaces. These standard choices were analytically explored in  \cite{CCM10}. In these discrete spaces, each iteration of a scheme for finding a discrete minimizer of $E$ requires the solution of a (typically elliptic) problem. Hence, the complexity of the final method depends naturally on the choice and the dimension of the discrete spaces.

In addition to the aforementioned standard choices, the usage of an alternative discrete space was proposed in \cite{HMP14}. Besides achieving high convergence orders, the proposed spaces have the advantage that they achieve their full potential already under minimal regularity assumptions on $u$ and the potential $V$. Their construction is based on techniques from numerical homogenization, which build a low-dimensional generalized finite element space using a localized orthogonal decomposition (LOD). The LOD splits an ideal solution space into a low-dimensional approximation space and a high-dimensional remainder space. The low-dimensional space (to which we refer to as the LOD-space) is enriched with problem-specific features and admits a set of quasi-local basis functions that can be efficiently used in practical computations. In particular, due to the very good approximation properties and the quasi-locality of a basis, LOD-spaces can be easily used in a traditional Galerkin approach. Historically, the LOD was first introduced in \cite{MP14} for elliptic multiscale problems with a rough coefficient. Since then, it has been further developed to suit a range of different problems, e.g., parabolic equations \cite{MP18,MP17,MPS18}, equations describing wave phenomena, see, e.g., \cite{AH17, P17, HP20, GHV18, LMP21}, non-linear problems \cite{HMP14_b,V21}, and more. In the context of nonlinear Schr\"odinger equations and GPEs, LOD techniques have been suggested in \cite{HMP14,HeW21}. For a review of the LOD we refer to the textbook by M{\aa}lqvist and Peterseim \cite{MaP21book} and the recent survey article on numerical homogenization \cite{AHP21}.

In this paper we revisit the LOD for the computation of ground states of the Gross-Pitaevskii equation (as introduced in \cite{HMP14}) in order to prove the higher order convergence rates that were numerically observed by the authors, but which have not yet been theoretically established. To be precise, the a priori error estimates proved in \cite{HMP14} for the GPE predict convergence rates of order $O(H^2)$ for the $H^1$-error, of order $O(H^3)$ for the $L^2$-error and the eigenvalue error and of order $O(H^4)$ for the error in energy, where $H$ is the \quotes{coarse} mesh size of the LOD-space. As already noted in \cite{HMP14}, these rates seemed suboptimal when compared to numerical experiments. In fact, in this contribution we prove the optimal rates, which are of order $O(H^3)$ for the $H^1$-error, of order $O(H^4)$ for the $L^2$-error and (with a dramatic improvement) of order $O(H^6)$ for the eigenvalue-error and the error in energy. For that we will not require any additional regularity assumptions and the results still hold for rough potentials $V \in L^{\infty}(\Omega)$, where the $H^1$- and energy estimates even hold for $V \in L^{2+\sigma}(\Omega)$ with any $\sigma>0$.

Besides the obvious advantage of efficiently approximating ground states in the LOD space, our new results have direct implications for the simulation of the dynamics of Bose--Einstein condensates. In such a setting, the dynamics are described by nonlinear Schr\"odinger equations (NLS), where the initial values are typically ground states of the GPE (with respect to some modified configuration). In \cite{HeW21} it was recently proved that when the NLS is discretized with LOD spaces, then the energy is approximated and conserved with an accuracy of order $O(H^6)$, provided that this accuracy can be already guaranteed for the initial value. Hence, this is exactly what we are establishing in this paper, as it justifies that the ground states computed in LOD-spaces can be straightforwardly used as initial values in an NLS without sacrificing the accuracy with which the energy is approximated over time.

Finally we note that in \cite{HMP14}, an additional post-processing step on a finer mesh is suggested, which improves the quality of the approximation even further. We shall not focus on the post-processing in this paper. However, we note that the convergence orders can be further improved by such a technique.

The outline of the paper is as follows; in Section~\ref{sec:GPE} the Gross-Pitaevskii equation is presented in more detail, in Section~\ref{sec:discretization} we describe the LOD discretization technique, in Section~\ref{sec:superconvergence} we present proofs of the optimal convergence rates in the $H^1$- and $L^2$-norm as well as for the energy and eigenvalue. Finally, in Section~\ref{sec:experiments} we provide two numerical experiments that confirm the predicted convergence rates, for both smooth and discontinuous potentials.
 
\section{The Gross-Pitaevskii eigenvalue problem}\label{sec:GPE}
Let $\Omega \subset \R^d$ be a bounded Lipschitz domain for $d=1,2,3$, with $\| v \| := \| v \|_{L^2(\Omega)}$ being the $L^2$-norm and $(v,w):=(v,w)_{L^2(\Omega)}$ being the $L^2$-inner product on $\Omega$. Furthermore, we denote by $H^1_0(\Omega)$ the Sobolev space of $L^2$-integrable and weakly-differentiable functions with a vanishing trace on the boundary $\partial \Omega$. With this, $\| v \|^2_{H^1(\Omega)}:=\| v\|^2 + \| \nabla v\|^2$ is the standard norm on $H^1(\Omega)$ and we denote by $\langle F , v \rangle:=\langle F , v \rangle_{H^{-1}(\Omega),H^1_0(\Omega)}$ the canonical duality pairing on $H^1_0(\Omega)$ and its dual space.

We consider the Gross--Pitaevskii energy functional $E: H^1_0(\Omega) \to \R$ defined by
\begin{align}
\label{definition-energy-functional}
E(\phi) := \frac{1}{2}\int_{\Omega} |\nabla\phi|^2 \intd x + \frac{1}{2} \int_{\Omega} V |\phi|^2 \intd x + \frac{1}{4}\int_{\Omega} \beta |\phi|^4 \intd x.
\end{align}
Here we make the following assumptions on the domain $\Omega$, the trapping potential $V$ and the interaction parameter $\beta$:
\begin{enumerate}[({A}1)]
	\item $\Omega$ is a convex domain with polygonal boundary (for $d=1,2,3$).
	\item $V \in L^{2+\sigma}(\Omega)$ and non-negative, where $\sigma>0$ for $d=3$ and $\sigma=0$ for $d=1,2$.
	\item $\beta \in \R_{\ge 0}$. 
\end{enumerate}
Observe that $E$ is two times Fr\'echet-differentiable, where we denote the corresponding first and second derivatives by $E^{\prime}$ and $E^{\prime\prime}$ respectively.

Under the above assumptions, we can seek the ground state of the energy $E$. The ground state is defined as a function $u\in H^1_0(\Omega)$ that minimizes $E(\cdot)$ under the normalization constraint that $\int_{\Omega} |u|^2  \intd x =1$, i.e.
\begin{align}\label{weak_problem}
E(u) = \inf_{\substack{\phi \in H^1_0(\Omega)\\\|\phi\|=1}} E(\phi).
\end{align}
The ground state is unique up to a sign and it can be equivalently characterized by an eigenvalue problem with eigenvector nonlinearity. The following result summarizes these analytical properties and a corresponding proof can be found in \cite{CCM10}.

\begin{theorem}\label{theorem-analytical-properties-u}
Under the assumptions (A1)-(A3), there exists a unique global minimizer of problem \eqref{weak_problem} with the property that $u$ is H\"older-continuous on $\overline{\Omega}$ and with $u>0$ on $\Omega$. \emph{The only other global minimizer is $-u$.}

Furthermore, the unique positive minimizer $u$ can be equivalently characterized by the Gross--Pitaevskii eigenvalue problem (GPE), which seeks the nonnegative eigenfunction $u\in H^1_0(\Omega)$ with $\| u \|=1$ and corresponding smallest eigenvalue $\lambda>0$, such that 
\begin{align}
-\Delta u + Vu + \beta|u|^2u &= \lambda u, \qquad \text{in } \Omega.\label{problem_1}
\end{align}
The smallest eigenvalue $\lambda$ and the ground state energy $E(u)$ are connected through
\begin{align}
\label{groundstate-EV-from-energy}
 \lambda = 2E(u) + \frac{\beta}{2}\|u\|^4_{L^4(\Omega)}.
\end{align}
\end{theorem}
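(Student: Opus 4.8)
The plan is to combine the direct method of the calculus of variations with a convexity argument in the density variable and with elliptic regularity. \textbf{Existence.} Let $(\phi_n) \subset H^1_0(\Omega)$ be a minimizing sequence for \eqref{weak_problem}, so $\|\phi_n\| = 1$ and $E(\phi_n) \to \inf E$. Since $V \ge 0$ and $\beta \ge 0$ by (A2)--(A3), the gradient term in \eqref{definition-energy-functional} satisfies $\tfrac12\|\nabla\phi_n\|^2 \le E(\phi_n)$, so $(\phi_n)$ is bounded in $H^1_0(\Omega)$. By reflexivity and the Rellich--Kondrachov theorem I extract a subsequence with $\phi_n \rightharpoonup u$ in $H^1_0(\Omega)$ and $\phi_n \to u$ strongly in $L^2(\Omega)$ and in $L^4(\Omega)$, the latter embedding being compact for $d \le 3$. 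Strong $L^2$-convergence preserves the constraint $\|u\|=1$; together with $V \in L^{2+\sigma}(\Omega)$ and H\"older's inequality it gives $\int_\Omega V|\phi_n|^2\intd x \to \int_\Omega V|u|^2\intd x$, while the quartic term converges by $L^4$-convergence. Weak lower semicontinuity of $\phi \mapsto \|\nabla\phi\|^2$ then yields $E(u) \le \liminf_n E(\phi_n)$, so $u$ is a minimizer.

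\textbf{Euler--Lagrange equation, sign and regularity.} Since $|\nabla|\phi|| = |\nabla\phi|$ a.e. and $\||\phi|\| = \|\phi\|$, one has $E(|\phi|) = E(\phi)$, so $|u|$ is also a minimizer and I may take $u \ge 0$. A Lagrange-multiplier argument for the constrained problem \eqref{weak_problem} produces the weak form of \eqref{problem_1} with multiplier $\lambda = \|\nabla u\|^2 + \int_\Omega V u^2\intd x + \beta\int_\Omega u^4\intd x > 0$; subtracting $2E(u) = \|\nabla u\|^2 + \int_\Omega V u^2\intd x + \tfrac\beta2\int_\Omega u^4\intd x$ gives at once the identity \eqref{groundstate-EV-from-energy}. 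To turn $u \ge 0$ into $u > 0$ I would use regularity: rewriting \eqref{problem_1} as $-\Delta u = (\lambda - V - \beta u^2)u$, a Moser/Stampacchia iteration first yields $u \in L^\infty(\Omega)$, whence the right-hand side lies in $L^{2+\sigma}(\Omega)$ and, by convexity of $\Omega$ from (A1), $u \in W^{2,2+\sigma}(\Omega)$; Sobolev embedding then gives H\"older continuity on $\overline\Omega$, and the strong maximum principle (Harnack's inequality) forces $u > 0$ in $\Omega$.

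\textbf{Uniqueness and the smallest eigenvalue.} The crux is uniqueness. Here I would pass to the density $\rho := u^2$, which by positivity satisfies $\rho > 0$ in $\Omega$ with $\int_\Omega\rho\intd x = 1$, and rewrite the energy as $\tilde E(\rho) := \tfrac18\int_\Omega |\nabla\rho|^2/\rho\intd x + \tfrac12\int_\Omega V\rho\intd x + \tfrac\beta4\int_\Omega\rho^2\intd x$, using $|\nabla u|^2 = |\nabla\rho|^2/(4\rho)$. Each term is convex in $\rho$: the perspective map $(\rho,q)\mapsto |q|^2/\rho$ is jointly convex for $\rho>0$, the potential term is linear, and the quartic term is convex; moreover the gradient term is strictly convex along directions where two positive densities differ. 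Hence $\tilde E$ admits at most one minimizer $\rho$, so the positive minimizer $u = \sqrt\rho$ is unique and, combined with the first step, the only minimizers of $E$ are $\pm u$. The equivalence with the eigenvalue problem \eqref{problem_1} and the fact that $\lambda$ is its smallest eigenvalue then follow by viewing \eqref{problem_1} as a linear Schr\"odinger eigenvalue problem for the operator $-\Delta + V + \beta u^2$ having the positive eigenfunction $u$: by a Perron--Frobenius/Krein--Rutman argument a positive eigenfunction belongs to the smallest eigenvalue, and uniqueness identifies it with the ground state. I expect the delicate points to be the strict-convexity step, which must control the behaviour near $\{\rho=0\}$ and is only clean once pointwise positivity is established, and the regularity bootstrap under the low assumption $V\in L^{2+\sigma}(\Omega)$.
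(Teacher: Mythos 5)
The paper gives no proof of Theorem \ref{theorem-analytical-properties-u}: it is quoted verbatim from \cite{CCM10} (``a corresponding proof can be found in \cite{CCM10}''), so there is no in-paper argument to compare against. Your sketch follows the same classical route as that reference: direct method for existence (the compactness and lower semicontinuity steps are fine for $d\le 3$ and $V\in L^{2+\sigma}$), the substitution $\rho=u^2$ and joint convexity of $(\rho,q)\mapsto |q|^2/\rho$ for uniqueness of the nonnegative minimizer, Harnack for strict positivity, and $\lambda=\langle E'(u),u\rangle$ for \eqref{groundstate-EV-from-energy}. One small correction: on a convex \emph{polygonal} domain you should not claim $u\in W^{2,2+\sigma}(\Omega)$ (this can fail for corner angles close to $\pi$); it is also unnecessary, since once Moser iteration gives $u\in L^\infty(\Omega)$ the right-hand side $(\lambda-V-\beta u^2)u$ lies in $L^2(\Omega)$, convexity gives $u\in H^2(\Omega)$, and $H^2(\Omega)\hookrightarrow C^{0,\alpha}(\overline\Omega)$ for $d\le 3$ already yields the H\"older continuity — this is exactly the chain used later in Lemma \ref{lemma_bounds_u}.

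The one step that does not close as written is the ``smallest eigenvalue'' claim — precisely the part the paper flags as the nontrivial observation. Your Krein--Rutman argument shows that $\lambda$ is the bottom eigenvalue of the \emph{linearized} operator $-\Delta+V+\beta u^2$, because $u>0$ is one of its eigenfunctions. But the assertion to be proved concerns the \emph{nonlinear} eigenvalue problem: if $(\mu,w)$ is any normalized eigenpair of \eqref{problem_1}, one must show $\mu\ge\lambda$ with equality only for $w=\pm u$. For such a $w$ the relevant linear operator is $-\Delta+V+\beta w^2$, not $-\Delta+V+\beta u^2$, and nothing in your argument compares the two; nor does uniqueness of the \emph{minimizer} by itself exclude a positive normalized eigenfunction $w\neq u$. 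The standard way to close this for two positive normalized solutions is a Picone-type identity: test the $u$-equation with $(w^2-u^2)/u$ and the $w$-equation with $(u^2-w^2)/w$ (after a regularization near $\partial\Omega$) and add; the normalization $\|u\|=\|w\|=1$ annihilates the eigenvalue terms and leaves a sum of nonnegative quantities, namely the Picone remainders plus $\beta\int_\Omega(u^2-w^2)^2\intd x$, equal to zero, which forces $w=u$ (for $\beta=0$ the Picone term alone forces $w=cu$ and then $c=1$). Sign-changing eigenfunctions are then handled by noting that any eigenfunction of \eqref{problem_1} is an eigenfunction of $-\Delta+V+\beta w^2$ whose first eigenvalue is attained only by functions of one sign. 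With that supplement the proposal is a sound sketch of the cited result.
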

Note the nontrivial observation in Theorem \ref{theorem-analytical-properties-u} that if $\lambda$ is the smallest eigenvalue of problem \eqref{problem_1}, then the corresponding eigenfunction $u$ is a ground state of $E$. This means that minimizing the energy and minimizing the eigenvalue is equivalent.

We also note that \eqref{weak_problem} is a weak formulation that can be written using the Fr\'echet derivative of $E$. More precisely, with
\begin{align*}
\langle E^{\prime}(u) , v \rangle = (\nabla u, \nabla v) + (Vu,v) + (\beta|u|^2u, v),
\end{align*}
we observe that any eigenpair $(\lambda,u)$ of \eqref{problem_1} satisfies 
\begin{align}
\label{problem_1_weak}
\langle E^{\prime}(u) , v \rangle  &= \lambda \, ( u,v) \qquad \mbox{for all } v \in H^1_0(\Omega).
\end{align}
This formulation expresses that the eigenvalue problem should be seen as finding the critical points of the (even, positive and convex) functional $E$ on the $L^2$-sphere 
$$\mathbb{S}:=\{ \phi \in H^1_0(\Omega)| \hspace{2pt} \| \phi \| = 1 \}.
$$ Consequently, we also know that the spectrum of the differential operator is real and unbounded.

In the more general form of the Gross--Pitaevskii eigenvalue problem, where a rotational term is included, the eigenfunctions $u$ are typically complex-valued, but the spectrum remains real. We write $|u|^2u$ instead of $u^3$ (which is the same for real functions), to work with the standard notation for the Gross--Pitaevskii equation that remains valid also in complex-valued settings.

Using the equivalence of the energy minimization perspective \eqref{weak_problem} and the GPE \eqref{problem_1} (respectively \eqref{problem_1_weak}), we can easily prove additional regularity for the ground state $u$ that will be useful in the convergence analysis.
\begin{lemma}\label{lemma_bounds_u}
	Assume (A1)-(A3). Let $u$ be a minimizer of \eqref{weak_problem} and let $\lambda >0$ denote the corresponding ground state eigenvalue in the sense of \eqref{problem_1}, then $u \in C^{0}(\overline{\Omega}) \cap H^2(\Omega)$ and $ |u|^2u \in H^1_0(\Omega) \cap H^2(\Omega)$ and we have the following bounds
	\begin{eqnarray}
	\|u\|_{H^2(\Omega)} &\leq& C \hspace{2pt}( \lambda + \| V\| \, \| u \|_{L^{\infty}(\Omega)} + \beta\|u\|^3_{H^1(\Omega)}), \label{elliptic_reg}\\
	\| |u|^2u\|_{H^2(\Omega)} &\leq& C \|u\|^3_{H^2(\Omega)}, \label{nonlinear_reg}
	\end{eqnarray}
where $C$ is a generic constant that only depends on $\Omega$ and $d$. Note that $\|u\|_{H^1(\Omega)}$ can be bounded again by $\sqrt{\lambda}$.
\end{lemma}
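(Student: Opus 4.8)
The plan is to bootstrap regularity from the pointwise information already supplied by Theorem~\ref{theorem-analytical-properties-u} and then invoke elliptic regularity on the convex domain. The continuity $u \in C^{0}(\overline{\Omega})$ is immediate from the H\"older-continuity asserted there, and in particular $u \in L^{\infty}(\Omega)$ with $u = 0$ on $\partial\Omega$.

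\emph{First bound.} I would rewrite the GPE \eqref{problem_1} as a Poisson problem $-\Delta u = f$ in $\Omega$ with $u \in H^1_0(\Omega)$ and right-hand side $f := \lambda u - V u - \beta |u|^2 u$. The key step is to verify $f \in L^2(\Omega)$ together with the claimed bound on $\|f\|$: since $\|u\|=1$ one has $\lambda\|u\| = \lambda$; using $u \in L^{\infty}(\Omega)$ and H\"older's inequality (recalling $V \in L^{2+\sigma}(\Omega) \hookrightarrow L^2(\Omega)$ on the bounded domain) gives $\|Vu\| \le \|V\|\,\|u\|_{L^{\infty}(\Omega)}$; and the Sobolev embedding $H^1(\Omega) \hookrightarrow L^6(\Omega)$ (valid for $d \le 3$) yields $\beta\||u|^2u\| = \beta\|u\|_{L^6(\Omega)}^3 \le C\beta\|u\|_{H^1(\Omega)}^3$. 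Collecting these three estimates gives $\|f\| \le \lambda + \|V\|\,\|u\|_{L^{\infty}(\Omega)} + C\beta\|u\|_{H^1(\Omega)}^3$. Since $\Omega$ is convex by (A1), the classical $H^2$-regularity result for the Dirichlet-Laplacian on convex polygonal domains applies and gives $u \in H^2(\Omega)$ together with $\|u\|_{H^2(\Omega)} \le C\|\Delta u\| = C\|f\|$; combined with the bound on $\|f\|$ this is exactly \eqref{elliptic_reg}.

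\emph{Second bound.} With $u \in H^2(\Omega) \cap L^{\infty}(\Omega)$ in hand, I would show $|u|^2u = u^3 \in H^2(\Omega)$ by computing its weak derivatives through the chain and product rules for Sobolev functions (justified since $t \mapsto t^3$ is smooth and $u$ is bounded): $\nabla(u^3) = 3u^2\nabla u$ and $\partial_i\partial_j(u^3) = 6u\,\partial_i u\,\partial_j u + 3u^2\partial_i\partial_j u$. Each contribution is then estimated via $\|u^2 \partial_i\partial_j u\| \le \|u\|_{L^{\infty}(\Omega)}^2\|\partial_i\partial_j u\|$ and $\|u\,\partial_i u\,\partial_j u\| \le \|u\|_{L^{\infty}(\Omega)}\|\nabla u\|_{L^4(\Omega)}^2$, where I use $H^2(\Omega) \hookrightarrow L^{\infty}(\Omega)$ and $H^2(\Omega) \hookrightarrow W^{1,4}(\Omega)$ (again $d \le 3$) to absorb $\|u\|_{L^{\infty}(\Omega)} + \|\nabla u\|_{L^4(\Omega)} \le C\|u\|_{H^2(\Omega)}$. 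Bounding the lower-order terms $\|u^3\|$ and $\|\nabla(u^3)\|$ in the same way then yields \eqref{nonlinear_reg}. Finally $u^3 \in H^1_0(\Omega)$ because $u^3 \in H^1(\Omega)$ has vanishing boundary trace, $u$ being in $H^1_0(\Omega) \cap C^{0}(\overline{\Omega})$ and hence zero on $\partial\Omega$.

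\emph{Main obstacle.} The computations are largely routine; the two points that require genuine care are (i) invoking the correct form of $H^2$-regularity, which truly relies on the convexity in (A1) and would fail on a polygon with reentrant corners, and (ii) checking that every product estimate respects the critical dimension $d=3$, which is exactly where the embeddings $H^1 \hookrightarrow L^6$ and $H^2 \hookrightarrow W^{1,4}$ are used within their admissible range. The restriction $d \le 3$ built into (A1) is what makes these tools available.
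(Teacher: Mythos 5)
Your proof is correct in its overall architecture and, for the two displayed bounds, follows essentially the same path as the paper: rewrite the GPE as $-\Delta u = f$, check $f\in L^2(\Omega)$, invoke $H^2$-regularity on the convex domain for \eqref{elliptic_reg}, and then estimate the weak derivatives of $u^3$ term by term via H\"older and Sobolev embeddings for \eqref{nonlinear_reg}. (Your exponent choices in the second-order terms, $L^\infty\times L^4\times L^4$ via $H^2\hookrightarrow W^{1,4}$, differ from the paper's $L^6\times L^6\times L^6$ via $H^1\hookrightarrow L^6$ applied to $D^{\alpha_i}u$, but both are admissible for $d\le 3$ and give the same bound.) The one place where you genuinely diverge is the continuity step: you take $u\in C^0(\overline\Omega)$ as ``immediate'' from Theorem~\ref{theorem-analytical-properties-u}, whereas the paper deliberately re-derives it. The authors state explicitly that the continuity claim in \cite{CCM10} was proved under regularity assumptions that differ slightly from (A2), and that they could not verify it for $V\in L^{2+\sigma}(\Omega)$ with $\sigma>0$ only; their bootstrap (showing $f\in L^{q/2}(\Omega)$ for some $q>d$ by a careful H\"older computation tuned to the exponent $2+\sigma$, then applying \cite[Theorem 8.30]{GiT01}) is precisely the piece of the proof that carries the new technical content. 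Your argument is not circular --- Lemma~\ref{lemma_bounds_u} is not used to prove Theorem~\ref{theorem-analytical-properties-u} --- but by outsourcing the continuity you inherit the very gap the authors were trying to close; note also that your bound $\|Vu\|\le\|V\|\,\|u\|_{L^\infty(\Omega)}$, and hence all of \eqref{elliptic_reg}, genuinely needs $u\in L^\infty(\Omega)$ as an input, so this step cannot simply be waved away. If you want a self-contained proof under (A1)--(A3) as stated, you should supply the $L^{q/2}$ bootstrap (or an equivalent De Giorgi--Nash--Moser type argument) rather than cite the continuity.
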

\begin{proof}
    In the following we write $a \lesssim b$ to abbreviate $a \le C\, b$ for a constant $C$ that only depends on $\Omega$ and $d$.
	In the first step we write
	\begin{align}
	\label{test-eqn-g}
	-\Delta u = \lambda u -Vu - \beta|u|^2u =: g.
	\end{align}
	If $g \in L^{q/2}(\Omega)$, for $q>d$, it follows that $u \in C^0(\bar\Omega)$, see e.g. \cite[Theorem 8.30]{GiT01}. To verify that this condition is fulfilled, we define $q$ such that $6q/(12 - q) = 2 +\sigma$, where $\sigma$ is as in (A2). For $d=1$ and $d=2$, this gives us $q=3$ and consequently also $q>d$ as desired. For $d=3$, the condition $q>d$ is equivalent to
	\begin{align*}
	2 + \sigma = \frac{6}{\frac{12}{q} - 1}  &> \frac{6}{\frac{12}{3} - 1} = 2, %\quad \text{for } d=3,  
	%\\ \frac{6}{\frac{12}{q} - 1}  &\leq 2, \quad \text{for } d=1,2,
	\end{align*}
	which is consistent with the assumption $\sigma>0$ in (A2). Without loss of generality, we further assume $\sigma \le 1$ so that $q\le 4$ for $d=3$.
	
	For $q$ as introduced above, we need to prove now that $g \in L^{q/2}(\Omega)$. For that we use H\"{o}lder's inequality and the Sobolev embedding $H^1_0(\Omega) \hookrightarrow L^6(\Omega)$ ($d \leq 3$) to achieve
	\begin{eqnarray*}
	\|g\|_{L^{q/2}(\Omega)} %&\leq \|\lambda u\|_{L^{q/2}} + \|Vu\|_{L^{q/2}} + \|\beta|u|^2u\|_{L^{q/2}} 
	&\lesssim& \lambda \| u\|_{L^{q/2}(\Omega)}  + \|V\|_{L^{6/(\frac{12}{q} - 1)}(\Omega)}\|u\|_{L^{6}(\Omega)} + \|\beta|u|^2u\|_{L^{q/2}(\Omega)} \\
	&\overset{q\le 4}{\lesssim}& \lambda + \|V\|_{L^{2+\sigma}(\Omega)} \|u\|_{H^1(\Omega)} + \|\beta|u|^2u\|,
	\end{eqnarray*}
	where we exploited $\|u\|=1$. The term $ \|V\|_{L^{2+\sigma}(\Omega)}$ is bounded by (A2) and for the last term we get
	\begin{align}\label{der_order_zero}
	\|\beta|u|^2u\| = \beta \|u\|^3_{L^{6}(\Omega)} \lesssim \beta\|u\|^3_{H^1(\Omega)}.
	\end{align}
	 Consequently, we verified  $g \in L^{q/2}(\Omega)$ for $q>d$ and we have $u \in C^{0}(\overline{\Omega})$. It is worth to mention that $u \in C^0(\overline{\Omega})$ was already proved in \cite{CCM10}, however, we could not verify the result under the stated regularity assumptions which slightly differ from ours. Therefore we decided to explicitly elaborate the argument here.
	 
	 Now that we have $u \in C^{0}(\overline{\Omega})$, we can use this result to show that we indeed even have $g \in L^2(\Omega)$, as
	\begin{align*}
	\| g \| \lesssim \lambda + \| V\| \, \| u\|_{L^{\infty}(\Omega)} +  \beta\|u\|^3_{H^1(\Omega)}.
	\end{align*}
	Using standard elliptic regularity theory for equation \eqref{test-eqn-g} we conclude that $u \in H^2(\Omega)$ with the estimate
	\begin{align*}
	\| u \|_{H^2(\Omega)} \lesssim \| \triangle u \| = \| g \|  \lesssim \lambda + \| V\| \, \| u\|_{L^{\infty}(\Omega)} +  \beta\|u\|^3_{H^1(\Omega)}.
	\end{align*}
	Hence, the bound in \eqref{elliptic_reg} follows.
	
	For \eqref{nonlinear_reg} note that
	\begin{align*}
	\||u|^2u\|^2_{H^2(\Omega)} = \|u^3\|^2_{H^2(\Omega)} = \sum_{|\alpha|\leq 2} \|D^\alpha u^3\|^2,
	\end{align*}
	where the multi-index $\alpha$ denotes the partial derivatives. For $|\alpha|=0$ the term reduces to \eqref{der_order_zero}.  For the first order derivatives, i.e., for $|\alpha|=1$, we get
	\begin{align}\label{der_order_one}
	\|D^\alpha u^3\| \leq \|3u^2D^\alpha u\|,
	\end{align}
	where H\"{o}lder's inequality and the Sobolev embedding gives
	\begin{align*}
	\|u^2D^\alpha u\| &%\leq \|u^2\|_{L^3} \|D^\alpha u\|_{L^6(\Omega)} 
	\leq \|u\|^2_{L^6(\Omega)} \|D^\alpha u\|_{L^6(\Omega)} \lesssim \|u\|^2_{H^1(\Omega)}\|D^\alpha u\|_{H^1(\Omega)} \lesssim \|u\|^2_{H^1(\Omega)}\|u\|_{H^2(\Omega)}.
	\end{align*}
	For the second order derivatives, i.e. for $|\alpha|=2$, we have
	\begin{align}\label{der_order_two}
	\|D^\alpha u^3\| &\leq \|6uD^{\alpha_1}u D^{\alpha_2}u\| + \|3 u^2D^\alpha u\|,
	\end{align}
	where $\alpha_1,\alpha_2$ are such that $\alpha = \alpha_1 + \alpha_2$.
	For the first term \eqref{der_order_two} we can use H\"{o}lder's inequality and the Sobolev embedding once again to obtain
	\begin{align*}
	\|uD^{\alpha_1}u D^{\alpha_2}u\| &\leq \|u\|_{L^6(\Omega)}\|D^{\alpha_1}u\|_{L^6(\Omega)}\|D^{\alpha_2}u\|_{L^6(\Omega)} \\&\lesssim \|u\|_{H^1(\Omega)}\|D^{\alpha_1}u\|_{H^1(\Omega)}\|D^{\alpha_2}u\|_{H^1(\Omega)} \lesssim \|u\|_{H^1(\Omega)}\|u\|^2_{H^2(\Omega)}.
	\end{align*}
	For the second term in \eqref{der_order_two} we get
	\begin{align*}
	\|u^2D^\alpha u\| &\leq \|u^2\|_{L^\infty(\Omega)}\|D^\alpha u\|.
	\end{align*}
	%Using Sobolev's inequality we get $\|u\|_{L^\infty(\Omega)} \lesssim \|u\|_{H^2(\Omega)}$ and we deduce
	%\begin{align*}
	%\|u^2D^\alpha u\| &\lesssim \|u^2\|_{H^2(\Omega)}\|u\|_{H^2(\Omega)}.
	%\end{align*}
	Using Sobolev's inequality we deduce $\|u^2\|_{L^\infty(\Omega)} = \|u\|^2_{L^\infty(\Omega)} \lesssim C\|u\|^2_{H^2(\Omega)}$.
	%We conclude
	%\begin{align*}
	%\|u^2\|_{H^2(\Omega)} \leq \sum_{|\alpha|\leq 2} \|D^\alpha u^2\| \leq C\|u\|^2_{H^2(\Omega)}.
	%\|u^2\| = \|u\|^2_{L^4(\Omega)} \leq C\|u\|^2_{H^1(\Omega)} \\
	%\|u D^\alpha u\| \leq \|u\|_{L^4(\Omega)}\|D^\alpha u\|_{L^4(\Omega)} \leq C\|u\|_{H^1(\Omega)}\|u\|_{H^2(\Omega)} \\
	%\|u D^\alpha u + D^{\alpha_1}uD^{\alpha_2}u\| \leq C\|u\|_{L_\infty}\|u\|_{H^2(\Omega)} + %\|u\|_{H^2(\Omega)}\|u\|_{H^2(\Omega)}
	%\end{align*} 
	%where we have used the bound $\|u\|_{H^1(\Omega)} \leq \|u\|_{H^2(\Omega)}$. 
	This gives 
	\begin{align*}
	\|u^2D^\alpha u\| \leq C\|u\|^3_{H^2(\Omega)}.
	\end{align*}
	Combining all the terms and using $\|u\|_{H^1(\Omega)} \leq \|u\|_{H^2(\Omega)}$ we achieve \eqref{nonlinear_reg}.
\end{proof}

\section{Discretization}\label{sec:discretization}
Approximations of the ground state can be found by minimizing the energy $E$ over a discrete space $\widehat{V}_{H}$ subject to the $L^2$-normalization constraint. The following abstract approximation result was established in \cite[Theorem 1 and Remark 2]{CCM10} and will be a central ingredient for our proofs.
\begin{theorem}
\label{theorem-approximation-properties-ground-states}
Assume (A1)-(A3) and let $u$ be the ground state of \eqref{weak_problem} (up to sign). Furthermore, let $(\widehat{V}_{H})_{{H}>0}$ be a family of finite-dimensional subspaces of $H^1_0(\Omega)$ with the property that
\begin{align*}
\min_{v_{H} \in \widehat{V}_{H} } \| u - v_H \|_{H^1(\Omega)} \overset{H \rightarrow 0}{\longrightarrow} 0.
\end{align*}
For each space $\widehat{V}_{H}$, let $\hat{u}_{H} \in \widehat{V}_{H}$ with $\| \hat{u}_{H} \|=1$ (i.e. $\hat{u}_{H} \in \mathbb{S}$) denote a discrete ground state, that is
\begin{align*}
E(\hat{u}_{H}) =\underset{v_{H} \in \widehat{V}_{H} \cap \mathbb{S}}{\mbox{\rm inf}}\hspace{2pt} E(v_{H}),
\end{align*}
with the property $(u , \hat{u}_{H})\ge 0$. Then there exist (problem-dependent) generic constants $C,c_1,c_2>0$ such that
\begin{align*}
\| u - \hat{u}_{H} \|_{H^1(\Omega)} \le C \min_{v_{H} \in \widehat{V}_{H} } \| u - v_{H} \|_{H^1(\Omega)},
\end{align*}
i.e., $\hat{u}_{H}$ is a quasi-best approximation to $u$ in the $H^1(\Omega)$-norm; and it also holds
\begin{align}
\label{estimate-nergy-abstract}
c_1 \| u -  \hat{u}_{H} \|_{H^1(\Omega)}^2 \le E(\hat{u}_{H}) - E(u)  \le c_2 \| u -  \hat{u}_{H} \|_{H^1(\Omega)}^2.
\end{align}
Finally, for the $L^2$-error it holds
\begin{align*}
\|u - \hat{u}_{H}  \|^2 \leq C \| u- \hat{u}_{H} \|_{H^1(\Omega)} \inf_{ v_{H} \in \widehat{V}_{H}} \| \psi_{u-\hat{u}_{H}} - v_H \|_{H^1(\Omega)},
\end{align*}
where for $\omega \in H^{-1}(\Omega)$, the function
\begin{align*}
\psi_\omega \in V^\perp_u := \{v\in H^1_0(\Omega) | \hspace{2pt} (u,v) = 0 \}
\end{align*}
is the unique solution to the dual problem
\begin{eqnarray}\label{dual_problem_ext_delta}
\langle (E^{\prime\prime}(u) -\lambda) \psi_\omega,  v \rangle = \langle \omega , v \rangle \qquad\mbox{for all } v \in V^{\perp}_u.
\end{eqnarray}
Here, $\lambda>0$ denotes the ground state eigenvalue given by \eqref{groundstate-EV-from-energy} and $E^{\prime\prime}(u) -\lambda$ is a linear elliptic operator with $\langle (E^{\prime\prime}(u) -\lambda) v,  v \rangle \ge C_{E} \| v\|_{H^1(\Omega)}^2$ for some fixed constant $C_E>0$ and for all $v\in H^1_0(\Omega)$.
\end{theorem}
It is easily checked that $E^{\prime\prime}(u)$ can be computed as
\begin{align}
\label{identity-Eprimeprime-prev}
\langle E^{\prime\prime}(u) v , w \rangle = (\nabla v, \nabla w) + (V\,v,w) + 3 (\beta|u|^2 v , w) \quad \mbox{for } v,w \in H^1_0(\Omega).
\end{align}
Note that with \eqref{problem_1_weak} we obtain
\begin{align}
\label{identity-Eprimeprime}
\langle E^{\prime\prime}(u) u , v \rangle = \langle E^{\prime}(u) , v \rangle + 2 (\beta|u|^2 u , v ) = \lambda\, (u,v) +  2 (\beta|u|^2 u , v ).
\end{align}
The results established in \cite{CCM10} also contain an abstract approximation result for the eigenvalue, which is however not optimal for all discrete spaces and needs to be therefore revisited individually depending on the considered discrete setting.

Following \cite{HMP14}, our next goal is the construction of a suitable discrete space that allows for very high convergence orders under minimal regularity assumptions. For that we start from a conventional conforming finite element discretization based on a (coarse) quasi-uniform triangulation $\mathcal{T}_H$ of the domain $\Omega$. Here $H$ denotes the mesh size parameter, i.e. $H =\max\limits_{T \in \mathcal{T}_H} \diam(T)$. With this, let $V_H$ be a classical $P1$ finite element space on $\mathcal{T}_H$ with
\begin{align*}
V_H &:= \{v \in C(\overline \Omega) \cap H^1_0(\Omega)|\hspace{4pt} v_{\vert_T} \text{ is a polynomial of degree $\leq$ 1}, \forall T \in \mathcal{T}_H \}.
\end{align*}
The classical finite element method seeks an approximation $u_H\in V_H$ with $\|u_H\|=1$, and
\begin{align}\label{fem_classic}
E(u_H) = \underset{v_H \in V_H \cap \mathbb{S}}{\mbox{\rm inf}}\hspace{2pt} E(v_H),
\end{align}
where the corresponding eigenvalue is given by $\lambda_H = 2E(u_H) + \tfrac{\beta}{2}\|u_H\|^4_{L^4(\Omega)}$.

Since $E$ is weakly lower semi-continuous on $H^1_0(\Omega)$ and bounded from below by zero, discrete minimizers always exist on finite dimensional spaces. However, it is typically not clear if the solution to \eqref{fem_classic} is also unique up to sign, see e.g. the discussion in \cite{CCM10}. Independent of that uniqueness, it can be shown that for any minimizer $u_H$ with $(u_H,u) \ge 0$ (i.e. consistent with the sign of $u$) it holds
\begin{align*}
\| u_H - u \|_{H^1(\Omega)} &\le C H, \hspace{40pt} \| u_H - u \| \le C H^2, \\
| \lambda_H - \lambda | &\le C H^2 \quad  \mbox{and}\quad E(u_H) - E(u) \le C H^2,
\end{align*}
where $u$ is the unique (up to sign) ground state and $C$ is a constant independent of $H$. The space that we construct in the next subsection will have the same dimension as $V_H$, but it will be able to boost the convergence rates to 3rd order for the $H^1$-error, 4th order for the $L^2$-error and 6th order for eigenvalue and energy error.

\subsection{Localized Orthogonal Decomposition}
In the following we summarize the technique proposed in \cite{HMP14} to make the paper self-contained. Let $P_H:H^1_0(\Omega)\to V_H$ denote the classical $L^2$-projection onto $V_H$ and let 
$$V_\f:=\ker (P_H) = \{v \in H^1_0(\Omega) | \hspace{4pt} P_H(v) = 0\},$$ 
be the kernel of the projection. Due to $H^1$-stability of the $L^2$-projection on quasi-uniform meshes (cf. \cite{BaY14}), the space $V_\f$ is a closed subspace of $H^1_0(\Omega)$. We expect $V_\f$ to contain fine scale details from $H^1_0(\Omega)$ that are not captured by the coarse space $V_H$. By definition we obtain the following $L^2$-orthogonal splitting 
\begin{align*}
H^1_0(\Omega) = V_H \oplus V_\f,
\end{align*}
meaning that any $v \in H^1_0(\Omega)$ can be uniquely decomposed as $v = v_H + v_\f$ such that $v_H \in V_H$ and $v_\f \in V_\f$. The decomposition is also orthogonal meaning that
\begin{align*}
(v_H,v_\f) = 0.
\end{align*}
Next, we define an inner product $a(\cdot,\cdot)$ on $H^1_0(\Omega)$, that is based on the linear terms in the GPE \eqref{problem_1}, by
\begin{align*}
a(v,w) := (\nabla v, \nabla w) + (V \, v, w ).
\end{align*}
This inner product induces another orthogonal splitting, namely
\begin{align*}
H^1_0(\Omega) = \VLOD \oplus V_\f, 
\end{align*}
where 
$$
\VLOD:=\{v\in H^1_0(\Omega)| \hspace{4pt} a(v,w) = 0 \quad \mbox{for all } w\in V_\f \}
$$
is the orthogonal complement to $V_\f$ with respect to $a(\cdot,\cdot)$. Note that, by construction, the space $\VLOD$ has the same dimension as $V_H$.  This is precisely the space in which we will look for discrete minimizers. This is fixed in the following definition.

\begin{definition}[LOD ground state approximation]
\label{def-LOD-groundstates}
The LOD ground state approximation is obtained by using the $\VLOD$ space in \eqref{weak_problem}. That is, we seek $\uLOD \in \VLOD$ such that $(\uLOD,u) \ge 0$, $\|\uLOD\|=1$, and
\begin{align}\label{LOD_approx}
E(\uLOD) = \inf_{ v \in \VLOD \cap \mathbb{S} } E(v),
\end{align}
where the corresponding discrete eigenvalue is given by 
$$
\lambdaLOD := 2 E(\uLOD) + \tfrac{\beta}{2}\|\uLOD\|^4_{L^4(\Omega)}.
$$
\end{definition}

\begin{remark}[Practical realization]
\label{remark-practical-realization}
The practical realization of \eqref{LOD_approx} requires some remarks. First of all, the condition $(\uLOD,u) \ge 0$ is never enforced in the numerical method, as it does not matter if $\uLOD$ converges to the positive or the negative ground state, the respective approximation properties are identical.

Second, for the construction of $\VLOD$ it is necessary to solve a set of saddle point problems that determine the  basis functions of $\VLOD$. For an efficient realization, this includes a localization step which is analytically justified by an exponential decay of the basis function in units of $H$ (cf. \cite{HeP13,MP14}). Furthermore, the space $V_\f$ needs a discrete representation on a finer mesh of size $h<H$. Comprehensive details on all these implementation details are given in \cite{ENGWER2019123}.

The influence of these additional approximations (i.e. truncation/localization and fine scale representation of the LOD basis functions) has been studied extensively in the literature and we note that they do not lead to a reduction of the convergence orders of the final method when chosen appropriately. For the Gross--Pitaevskii eigenvalue problem, the influence has been analytically studied in \cite{HMP14}. 

Further practical tricks for an efficient treatment of the nonlinear term are discussed in \cite{HeW21}.

Finally we also stress that aside from the construction of the LOD space, a practical realization also requires an iterative method for finding a discrete minimizer.  Here various techniques are possible, where we refer to the introduction for a corresponding literature survey.

As seen by the above discussion, the total computational complexity of the method depends on various choices (such as the degree of localization and the selected iterative solver). In some cases, also the structure of the potential $V$ can be exploited to actually only compute  a few LOD basis functions and obtain the remaining ones by rotations and translations. This leads to significant computational savings. Additional speedups can be obtained by using inexact solves for the iterative solver. Due to the variety of choices, general statements about the computational complexity are barely possible. However, we stress that the sparsity structure and the condition number of system matrices involving the LOD space are of the same order as for system matrices in the standard coarse space $V_H$. Any overhead is purely caused by the precomputation of LOD basis functions and their storage.
\end{remark}
In the light of Remark \ref{remark-practical-realization}, all the proofs below are based on the ideal setting of Definition \ref{def-LOD-groundstates}, i.e., the setting without localization and fine scale representation. This simplifies both the arguments and the notation. By considering localization, an additional term depending on the size of the localization patches would appear in the error estimates. The implications of truncation and fine scale discretization are already thoroughly discussed in \cite{HMP14} for the GPE, which is why we omit this part in our paper. This allows us to keep the presentation short and to focus on the main novelty of this contribution, namely the arguments that actually improve the convergence orders in the proofs.

We have the following main result, which we shall prove in Section \ref{sec:superconvergence}.

\begin{theorem}\label{thm_main}
	Assume (A1)-(A3) and let $u$ and $\uLOD$ be solutions to \eqref{weak_problem} and \eqref{LOD_approx}, respectively. Then it holds
\begin{align*}
\nonumber\| \uLOD - u \|_{H^1(\Omega)} \le C H^3 \qquad \mbox{and} \qquad E(\uLOD) - E(u) \le C H^6.
\end{align*}
If furthermore $V\in L^{\infty}(\Omega)$, then it also holds
\begin{align*}
\| \uLOD - u \| \le C H^4 \qquad \mbox{and} \qquad
| \lambdaLOD - \lambda | \le C H^6.
\end{align*}
In the estimates, $C$ depends on $V$, $\lambda$, $\beta$, $\Omega$, $\| u \|_{L^{\infty}(\Omega)}$ and the mesh regularity of $\mathcal{T}_H$.
\end{theorem}
Compared to the original result proved in \cite{HMP14}, the rates for the $L^2$ and $H^1$-error are both improved by one order each, the rate for the energy by two orders and the rate for the eigenvalue is dramatically improved by three orders. However, we also stress that the equation considered in \cite{HMP14} is slightly more general, involving another (potentially discontinuous) coefficient in the kinetic term. Therefore the rates in \cite{HMP14} are optimal for this generalized setting, but only suboptimal for the GPE. The estimates in Theorem \ref{thm_main} can be even further improved significantly by using a post-processing technique as in \cite{HMP14}.

Finally, we also note that the rates for the $L^2$- and $H^1$-error in Theorem \ref{thm_main} are the same (with respect to space) as for suitable LOD discretizations of the time-dependent Gross--Pitaevskii equation, cf. \cite{HeW21}. Similarly, the energy (which is an invariant of the time-dependent GPE) can be also approximated with a $\mathcal{O}(H^6)$-accuracy if the initial value is sufficiently smooth and if the selected time integrator is energy-conservative.

\section{Proof of Optimal Convergence Rates}\label{sec:superconvergence}
In this section we prove the optimal convergence rates of order $O(H^3)$ in the $H^1$-norm, $O(H^4)$ in the $L^2$-norm, and $O(H^6)$ for both the energy and the eigenvalue. The error bounds presented below all depend on some constant $C$, which may depend on the ground state eigenvalue $\lambda$, the potential $V$, the interaction constant $\beta$, and the domain $\Omega$, but not on the mesh size $H$. 

In the analysis below we shall make use of the $a(\cdot,\cdot)$-orthogonal projection $\PLOD : H^1_0(\Omega) \to \VLOD$ defined by
\begin{align}\label{PLOD_eq}
a(\PLOD(v),w) = a(v,w) \qquad  \mbox{for all } w \in \VLOD.
\end{align}
The following approximation properties of the LOD space are well known and are explicitly stated and proved in \cite[Section 2.1]{HeW21}.
\begin{lemma}\label{PLOD_error}
	Assume (A1)-(A2). Let $v\in H^1_0(\Omega)$ and assume that 
	\begin{align*}
	a(v,w) = (f,w) \quad \mbox{for all } w \in H^1_0(\Omega).
	\end{align*}
	If $f\in L^2(\Omega)$, then 
	\begin{align}\label{PLOD_error_H}
	\|v-\PLOD(v)\| + H\|v-\PLOD(v)\|_{H^1(\Omega)}\leq CH^2\|f\|.
	\end{align}
	If $f\in H^2(\Omega)\cap H^1_0(\Omega)$, then 
	\begin{align}\label{PLOD_error_H3}
	\|v-\PLOD(v)\| + H\|v-\PLOD(v)\|_{H^1(\Omega)}\leq CH^4\|f\|_{H^2(\Omega)}.
	\end{align}
	The constant $C$ only depends on $\Omega$ and the mesh regularity of $\mathcal{T}_H$.
\end{lemma}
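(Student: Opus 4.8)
The plan is to exploit the defining $a$-orthogonal splitting $H^1_0(\Omega) = \VLOD \oplus V_\f$. Writing $e := v - \PLOD(v)$, the single structural fact that drives everything is that $e \in V_\f$: since $\PLOD$ is the $a$-orthogonal projection onto $\VLOD$, the error $e$ is $a$-orthogonal to $\VLOD$, and $V_\f$ is precisely the $a$-orthogonal complement of $\VLOD$. In particular $P_H(e) = 0$, so $e$ is $L^2$-orthogonal to the coarse space $V_H$, i.e. $(e,\chi) = 0$ for all $\chi \in V_H$. I would record two elementary ingredients up front. First, a Poincar\'e-type estimate on $V_\f$: for $w \in V_\f$ one has $\|w\| = \|w - P_H(w)\| \le CH\|\nabla w\|$, using $P_H(w)=0$ together with the standard $L^2$-approximation bound $\|w - P_H(w)\| \le CH\|\nabla w\|$ for the $L^2$-projection on the quasi-uniform mesh (best approximation plus a quasi-interpolation estimate). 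Second, the $L^2$-orthogonality just noted lets me subtract coarse functions at will: $(f,w) = (f - P_H(f), w)$ for every $w \in V_\f$. Throughout I use that $a(\cdot,\cdot)$ is coercive on $H^1_0(\Omega)$ (since $V \ge 0$), so $\|\nabla w\|^2 \le a(w,w)$ and $\|\cdot\|_{H^1(\Omega)}$ is equivalent to the $a$-norm.

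The key Galerkin-type identity then follows immediately: for $w \in V_\f$, the $a$-orthogonality $a(\PLOD(v),w)=0$ gives $a(e,w) = a(v,w) = (f,w) = (f - P_H(f), w)$. To obtain \eqref{PLOD_error_H} I would test with $w = e$ and combine coercivity with the Poincar\'e estimate on $V_\f$: $\|\nabla e\|^2 \le a(e,e) = (f,e) \le \|f\|\,\|e\| \le CH\|f\|\,\|\nabla e\|$, so that $\|\nabla e\| \le CH\|f\|$ and then $\|e\| \le CH\|\nabla e\| \le CH^2\|f\|$. Together these give \eqref{PLOD_error_H}. Notably this step needs no elliptic regularity of the domain, only the mesh-dependent Poincar\'e inequality on $V_\f$.

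For the improved bound \eqref{PLOD_error_H3} I would run an Aubin--Nitsche duality argument, using \eqref{PLOD_error_H} itself as a building block. Introduce the dual function $z \in H^1_0(\Omega)$ solving $a(z,w) = (e,w)$ for all $w \in H^1_0(\Omega)$. Testing with $w=e$ and using $e \perp_a \VLOD$ to discard $\PLOD(z)$ yields
\[
\|e\|^2 = a(e,z) = a(e, z - \PLOD(z)) = (f - P_H(f),\, z - \PLOD(z)),
\]
where the last identity is the Galerkin relation applied to $w = z - \PLOD(z) \in V_\f$. Bounding by Cauchy--Schwarz and inserting, on one hand, the $L^2$-projection estimate $\|f - P_H(f)\| \le CH^2\|f\|_{H^2(\Omega)}$ (valid for $f \in H^2(\Omega)\cap H^1_0(\Omega)$ via nodal interpolation into $V_H$) and, on the other, the already proven \eqref{PLOD_error_H} applied to $z$ with right-hand side $e \in L^2(\Omega)$, namely $\|z - \PLOD(z)\| \le CH^2\|e\|$, gives $\|e\|^2 \le CH^4\|f\|_{H^2(\Omega)}\|e\|$ and hence $\|e\| \le CH^4\|f\|_{H^2(\Omega)}$. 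The matching $H^1$-bound $\|\nabla e\| \le CH^3\|f\|_{H^2(\Omega)}$ then follows by feeding this back into $\|\nabla e\|^2 \le (f - P_H(f), e) \le \|f - P_H(f)\|\,\|e\|$, which completes \eqref{PLOD_error_H3}.

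The main obstacle I anticipate is organizational rather than deep: one must arrange the two orthogonalities (the $a$-orthogonality of $e$ to $\VLOD$ and the $L^2$-orthogonality of $V_\f$ to $V_H$) so that both the coarse part $\PLOD(z)$ and the projected right-hand side $P_H(f)$ drop out at the right moments, and one must apply the first estimate to the dual problem to extract the two extra powers of $H$. The only genuine analytic input is the $O(H^2)$ $L^2$-approximation of $f$ by $V_H$, which is where the hypothesis $f \in H^2(\Omega)\cap H^1_0(\Omega)$ (rather than merely $f \in L^2(\Omega)$) is used, together with the $L^2$-approximation property of $V_H$ on the quasi-uniform mesh underlying the Poincar\'e estimate on $V_\f$.
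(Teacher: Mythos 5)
Your proof is correct, and it is essentially the standard argument that the paper delegates to \cite[Section 2.1]{HeW21}: the whole lemma rests on the facts that $v-\PLOD(v)$ lies in $V_\f=\ker(P_H)$ (giving the Poincar\'e-type bound $\|e\|\le CH\|\nabla e\|$), that test functions in $V_\f$ annihilate $P_H(f)$, and that $\|f-P_H(f)\|\le CH^2\|f\|_{H^2(\Omega)}$ -- exactly the three ingredients you isolate, and indeed no elliptic regularity of $\Omega$ enters. One small remark: the Aubin--Nitsche step for \eqref{PLOD_error_H3} is superfluous, since once you have $\|\nabla e\|^2\le \|f-P_H(f)\|\,\|e\|\le CH^2\|f\|_{H^2(\Omega)}\cdot CH\|\nabla e\|$ you get $\|\nabla e\|\le CH^3\|f\|_{H^2(\Omega)}$ directly, and then $\|e\|\le CH\|\nabla e\|\le CH^4\|f\|_{H^2(\Omega)}$ follows from the kernel Poincar\'e inequality without introducing the dual function $z$.
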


\subsection{Convergence in $H^1$-norm}
We start with the $H^1$-error estimate which follows from the abstract $H^1$-best approximation result in Theorem \ref{theorem-approximation-properties-ground-states}, combined with the approximation properties in Lemma \ref{PLOD_error} and the regularity result in Lemma \ref{lemma_bounds_u}. We have:

\begin{proposition}\label{thm_error_H1}
	Let $u$ and $\uLOD$ be solutions to \eqref{weak_problem} and \eqref{LOD_approx}, respectively. It holds that
	\begin{align*}
	\|u - \uLOD\|_{H^1(\Omega)} \leq CH^3.
	\end{align*}
\end{proposition}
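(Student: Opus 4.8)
The plan is to invoke the abstract $H^1$-quasi-best approximation result from Theorem~\ref{theorem-approximation-properties-ground-states}, which reduces the task to controlling the best-approximation error of $u$ in the LOD space. Since that theorem guarantees
\[
\|u - \uLOD\|_{H^1(\Omega)} \le C \min_{v \in \VLOD} \|u - v\|_{H^1(\Omega)},
\]
it suffices to exhibit one good approximant in $\VLOD$. The natural choice is $\PLOD(u)$, the $a(\cdot,\cdot)$-orthogonal projection of the exact ground state defined in \eqref{PLOD_eq}, giving $\min_{v \in \VLOD}\|u-v\|_{H^1(\Omega)} \le \|u - \PLOD(u)\|_{H^1(\Omega)}$.

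The crux is then to apply the high-order projection estimate \eqref{PLOD_error_H3} of Lemma~\ref{PLOD_error} to $v = u$. To use it I must identify the right-hand side $f$ such that $a(u,w) = (f,w)$ for all $w \in H^1_0(\Omega)$, and verify that $f \in H^2(\Omega) \cap H^1_0(\Omega)$. Reading off the bilinear form, $a(u,w) = (\nabla u, \nabla w) + (Vu, w)$, and comparing with the weak form \eqref{problem_1_weak} of the GPE, I find
\[
a(u,w) = (\nabla u, \nabla w) + (Vu,w) = \lambda(u,w) - (\beta|u|^2 u, w),
\]
so that $f = \lambda u - \beta|u|^2 u$. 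The key regularity input is Lemma~\ref{lemma_bounds_u}, which provides both $u \in H^2(\Omega)$ and, crucially, $|u|^2 u \in H^1_0(\Omega) \cap H^2(\Omega)$ with the bound \eqref{nonlinear_reg}. Hence $f \in H^2(\Omega) \cap H^1_0(\Omega)$ with $\|f\|_{H^2(\Omega)} \le \lambda\|u\|_{H^2(\Omega)} + \beta\||u|^2u\|_{H^2(\Omega)} \lesssim \lambda\|u\|_{H^2(\Omega)} + \beta\|u\|^3_{H^2(\Omega)}$, which is finite and absorbed into the constant $C$.

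With $f$ so identified, \eqref{PLOD_error_H3} yields directly $\|u - \PLOD(u)\|_{H^1(\Omega)} \le C H^3 \|f\|_{H^2(\Omega)}$ (taking the $H^1$-part of the combined estimate, which carries the extra factor $H^{-1}$ and thus the cubic rate), and combining with the quasi-best approximation property closes the argument. I expect the only genuine subtlety to be the verification that $f$ has the homogeneous boundary trace and $H^2$-regularity needed to invoke the higher-order branch of Lemma~\ref{PLOD_error} rather than the lower-order branch \eqref{PLOD_error_H}; this is precisely where the nonlinear regularity statement $|u|^2u \in H^1_0(\Omega)\cap H^2(\Omega)$ of Lemma~\ref{lemma_bounds_u} does the essential work, and it is the step that upgrades the rate from the suboptimal $O(H^2)$ to the optimal $O(H^3)$.
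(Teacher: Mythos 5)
Your proposal is correct and follows essentially the same route as the paper's proof: both reduce to the quasi-best approximation property of Theorem~\ref{theorem-approximation-properties-ground-states}, choose $\PLOD(u)$ as the competitor, identify $f=\lambda u-\beta|u|^2u$ from the weak form, and invoke the regularity $|u|^2u\in H^1_0(\Omega)\cap H^2(\Omega)$ from Lemma~\ref{lemma_bounds_u} to apply the higher-order branch \eqref{PLOD_error_H3}. No gaps.
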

\begin{proof}
	We note that 
	\begin{align*}
	a(u,v) = (\lambda u - \beta|u|^2u,v) \qquad \mbox{for all } v \in H^1_0(\Omega),
	\end{align*}
	where $|u|^2u \in H^1_0(\Omega) \cap H^2(\Omega)$ by Lemma \ref{lemma_bounds_u}. Hence, we can apply 
	estimate \eqref{PLOD_error_H3} to get
	\begin{align*}
	\|u-\PLOD(u)\|_{H^1(\Omega)}\leq CH^3\|\lambda u - \beta |u|^2u\|_{H^2(\Omega)}.
	\end{align*}
	From Theorem \ref{theorem-approximation-properties-ground-states} it follows that
	\begin{align*}
	\|u-\uLOD\|_{H^1(\Omega)} \leq C \|u-\PLOD(u)\|_{H^1(\Omega)} \le C H^3\|\lambda u - \beta |u|^2u\|_{H^2(\Omega)}.
	\end{align*}
	%Replacing $\phi$ with $\PLOD(u)$ and using the bounds in Lemma~\ref{lemma_bounds_u} completes the proof.
\end{proof}
\subsection{Convergence in $L^2$-norm}
Next, we turn our attention to proving the $L^2$-error estimate which reads as follows.
\begin{proposition}\label{thm_error_L2}
	Let $u$ and $\uLOD$ be solutions to \eqref{weak_problem} and \eqref{LOD_approx}, respectively. If  $V\in L^{\infty}(\Omega)$, then it holds
	\begin{align*}
	\|u - \uLOD\| \leq C\,H^4.
	\end{align*}
\end{proposition}
The proof is similar to \cite[Section 6.3]{HMP14} with several simplifying modifications that keep the proof short.

\begin{proof}[Proof of Proposition \ref{thm_error_L2}]
Let $e_\LOD=u-\uLOD$. With Theorem \ref{theorem-approximation-properties-ground-states} we have
	\begin{align}
	\label{abstract-L2est-LOD}
	\| e_\LOD \|^2 \leq C\| e_\LOD \|_{H^1(\Omega)} \inf_{\psi \in \VLOD} \|\psi_{e_\LOD} - \psi\|_{H^1(\Omega)},
	\end{align}
	where we recall $\psi_{e_\LOD} \in V^\perp_u = \{v\in H^1_0(\Omega) | \hspace{2pt} (u,v) = 0 \}$ as the solution to the dual problem
\begin{eqnarray}
\label{dual_problem}
\langle (E^{\prime\prime}(u) -\lambda) \psi_{e_\LOD},  v_{\perp} \rangle = ( e_\LOD , v_{\perp} ) \qquad\mbox{for all } v_{\perp} \in V^{\perp}_u.
\end{eqnarray}	
By ellipticity of $E^{\prime\prime}(u) -\lambda$ we have $\| \psi_{e_\LOD} \|_{H^1(\Omega)} \le C \| e_\LOD  \|$. If $P_{V^{\perp}_u}(v):=v - (u,v)u$ denotes the $L^2$-projection of $H^1_0(\Omega)$ onto $V^{\perp}_u$, we can replace the test functions in \eqref{dual_problem} by $v_{\perp}=P_{V^{\perp}_u}(v)$ for arbitrary $v\in H^1_0(\Omega)$. Exploiting 
additionally that $u$ solves the eigenvalue problem and 
the $L^2$-orthogonality induced by the projection, we obtain
\begin{align*}
( P_{V^{\perp}_u}( e_\LOD) , v ) &=  ( e_\LOD , P_{V^{\perp}_u}(v) ) 
\overset{\eqref{dual_problem}}{=} \langle (E^{\prime\prime}(u) -\lambda) \psi_{e_\LOD}, P_{V^{\perp}_u}(v) \rangle \\
&= \langle (E^{\prime\prime}(u) -\lambda) \psi_{e_\LOD}, v \rangle - (u,v) \, \langle (E^{\prime\prime}(u) -\lambda) \psi_{e_\LOD}, u \rangle\\
&\hspace{-3pt}\overset{\eqref{identity-Eprimeprime}}{=} 
 \langle (E^{\prime\prime}(u) -\lambda) \psi_{e_\LOD}, v \rangle - (u,v) \,  2 (\beta|u|^2 u ,  \psi_{e_\LOD} ).
\end{align*}
By rearranging the terms and using $P_{V^{\perp}_u}( e_\LOD) =e_\LOD - (u,e_\LOD)u$ we verify that we 
can express \eqref{dual_problem} as
\begin{align}\label{dual_problem_ext}
%(\nabla \psi_\omega, \nabla v) + (V\psi_\omega,v) &+ 3\beta(|u|^2\psi_\omega,v) - (\lambda \psi_\omega, v) \\&= 
%(2\beta(|u|^2u,\psi_\omega)u,v) + (\omega - (\omega,u)u,v), \qquad \mbox{for all } v \in H^1_0(\Omega).
\langle (E^{\prime\prime}(u) -\lambda) \psi_{e_\LOD},  v \rangle =
2\beta((|u|^2u,\psi_{e_\LOD})u,v) + (e_\LOD - (e_\LOD,u)u,v)
\end{align}
for all $v \in H^1_0(\Omega)$. Here we note a small typo in the original formulation of \cite{HMP14} where one multiplication with $u$ is missing in the second term in the right hand side. Analogously to the argument in the proof of Lemma \ref{lemma_bounds_u}
we conclude with \eqref{identity-Eprimeprime-prev} that \eqref{dual_problem_ext} can be interpreted as a standard linear elliptic problem of the form
\begin{align*}
- \Delta \psi_{e_\LOD} = (\lambda - V - 3 \beta \, |u|^2 ) \psi_{e_\LOD} + 2\beta (|u|^2u,\psi_{e_\LOD})\, u + e_\LOD - (e_\LOD,u)u =: g
\end{align*}
with right hand side in $g \in L^2(\Omega)$ which we can bound by
\begin{eqnarray*}
\| g \| 
&\le& ( |\lambda| + \|V\|_{L^{\infty}(\Omega)}  + 3 \beta \| u \|_{L^{\infty}(\Omega)}^2 ) \|  \psi_{e_\LOD}  \|\\
&\enspace&\qquad+  2\beta | (|u|^2u,\psi_{e_\LOD})| \, \| u\| + \| e_\LOD \| + | (e_\LOD,u) | \, \|u\| \\
&\overset{\| u\|=1}{\le}& ( |\lambda| + \|V\|_{L^{\infty}(\Omega)}  + 3 \beta \| u \|_{L^{\infty}(\Omega)}^2 + 2 \beta \| u \|_{L^6(\Omega)}^3) \|  \psi_{e_\LOD}  \|
+ 2 \|  e_\LOD  \|.
\end{eqnarray*}
Since we already know that $\|  \psi_{e_\LOD}  \| \le C \|  \psi_{e_\LOD}  \|_{H^1(\Omega)} \le C  \| e_\LOD  \|$ we conclude that
\begin{align*}
\| g \| \le  C  \| e_\LOD  \|.
\end{align*}
With elliptic regularity theory for the Poisson problem $- \Delta \psi_{e_\LOD} = g$ on the convex domain $\Omega$, we conclude that $ \psi_{e_\LOD} \in H^2(\Omega)$ and it holds the regularity estimate $ \| \psi_{e_\LOD} \|_{H^2(\Omega)}  \leq C \| g \|$. Together with the Sobolev embeddings we arrive at
\begin{align}\label{psi_bound}
\| \psi_{e_\LOD} \|_{L^\infty(\Omega)} \le C \| \psi_{e_\LOD} \|_{H^2(\Omega)}  \leq C \| g \| \le  C\| e_\LOD \|.
\end{align}
With estimate \eqref{psi_bound} at hand, we want to estimate $ \inf_{\psi \in \VLOD} \|\psi_{e_\LOD} - \psi\|_{H^1(\Omega)}$ in \eqref{abstract-L2est-LOD}. For that we select $\psi = \PLOD(\psi_{e_\LOD})$ and apply \eqref{PLOD_error_H} and \eqref{dual_problem_ext} to obtain
\begin{align*}
\inf_{\psi \in \VLOD} \|\psi_{e_\LOD} - \psi\|_{H^1(\Omega)} &\leq C H\big(\|3\beta|u|^2\psi_{e_\LOD}\| + \|\lambda \psi_{e_\LOD}\| \\&\qquad +\|2\beta(|u|^2u,\psi_{e_\LOD})u\| + \|e_\LOD - (e_\LOD,u)u\|\big).
\end{align*}
	We analyze the terms on the right hand side one by one. H\"{o}lder's inequality and the Sobolev inequality $\|u\|_{L^6(\Omega)} \leq C\|u\|_{H^1(\Omega)}$ (for $d\leq 3$) give us for the first term
	\begin{align*}
	\||u|^2\psi_{e_\LOD}\| \leq \|u^2\| \, \|\psi_{e_\LOD}\|_{L^\infty(\Omega)} = \|u\|^2_{L^4(\Omega)}\|\psi_{e_\LOD}\|_{L^\infty(\Omega)} \overset{\eqref{psi_bound}}{\leq} C\|u\|^2_{H^1(\Omega)}\|e_\LOD\|.
	\end{align*}
	For the second term we readily have
	%\begin{align*}
	$\|\lambda \psi_{e_\LOD}\| \leq C\lambda \|e_\LOD\|$.
	%\end{align*}
	For the third term we use \eqref{der_order_zero} and $\|u\|=1$ to get
	\begin{align*}
	\|2\beta(|u|^2u,\psi_{e_\LOD})u\| &\leq C|(|u|^2u,\psi_{e_\LOD})| \, \|u\|\leq C\||u|^2u\| \, \|\psi_{e_\LOD}\| \\&\leq C\|u\|^3_{H^1(\Omega)}\|e_\LOD\|.
	\end{align*}
	With  $\|u\|=1$, the last term is estimated as
	\begin{align*}
	\|e_\LOD - (e_\LOD,u)u\| &\leq 	2 \|e_\LOD\|.
	\end{align*}
	Combining the estimates for all terms yields
\begin{eqnarray*}
\inf_{\psi \in \VLOD} \|\psi_{e_\LOD} - \psi\|_{H^1(\Omega)} \le C H \|e_\LOD\|.
\end{eqnarray*}	
	Estimate  \eqref{abstract-L2est-LOD} together with the $H^1$-estimate in Proposition~\ref{thm_error_H1} finish the proof.	
\end{proof}

\subsection{Convergence of Energy}
The following error estimate for the energy is a direct consequence of the $H^1$-estimate in Proposition~\ref{thm_error_H1} and the abstract energy estimate \eqref{estimate-nergy-abstract} in Theorem \ref{theorem-approximation-properties-ground-states}.
\begin{proposition}\label{thm_error_energy}
	Let $u$ and $\uLOD$ be solutions to \eqref{weak_problem} and \eqref{LOD_approx}, respectively. It holds that
	\begin{align*}
	| E(u) - E(\uLOD) | \leq C H^6.
	\end{align*} 
\end{proposition}

\subsection{Convergence of Eigenvalues}
The optimal convergence rates for the eigenvalue involve the most substantial changes compared to \cite{HMP14}. In the first step, we prove the following auxiliary result, which established optimal convergence rates in the $H^{-2}$-norm. 
\begin{lemma}\label{lemma_dual_error}
Assume $V\in L^{\infty}(\Omega)$ and let $u$ and $\uLOD$ be the solutions to \eqref{weak_problem} and \eqref{LOD_approx}, respectively. Then it holds
\begin{align*}
\|u-\uLOD\|_{H^{-2}(\Omega)} 
:= \sup_{\omega \in H^2(\Omega)\cap H^1_0(\Omega)\setminus \{0\}} \frac{(\omega,u-\uLOD)}{\|\omega\|_{H^2(\Omega)}}
\leq CH^6.
\end{align*}
\end{lemma}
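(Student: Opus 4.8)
The plan is to prove the $H^{-2}$-estimate via a duality argument analogous to the $L^2$-estimate in Proposition \ref{thm_error_L2}, but now testing against a dual problem whose right-hand side is a fixed $\omega \in H^2(\Omega)\cap H^1_0(\Omega)$ rather than against $u-\uLOD$ itself. Concretely, for each such $\omega$ I would introduce the dual solution $\psi_\omega \in V^\perp_u$ solving the problem \eqref{dual_problem_ext_delta}, i.e.
\begin{align*}
\langle (E^{\prime\prime}(u) -\lambda) \psi_\omega,  v_\perp \rangle = (\omega, v_\perp) \qquad \mbox{for all } v_\perp \in V^\perp_u.
\end{align*}
The abstract $L^2$-type estimate from Theorem \ref{theorem-approximation-properties-ground-states} would then give a bound of the form $(\omega, u-\uLOD) \lesssim \|u-\uLOD\|_{H^1(\Omega)}\inf_{\psi \in \VLOD}\|\psi_\omega - \psi\|_{H^1(\Omega)}$, so the whole argument reduces to controlling how well $\psi_\omega$ is approximated in $\VLOD$.

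The crucial improvement over the $L^2$-case is that now I can afford to lose the better regularity of $\psi_\omega$ to the $H^2$-norm of $\omega$. First I would rewrite the dual problem in the full-test-function form \eqref{dual_problem_ext} (projecting with $P_{V^\perp_u}$ and using the $L^2$-orthogonality as before), which recasts it as a linear elliptic equation $a(\psi_\omega, v) = (\tilde g, v)$ with right-hand side $\tilde g$ collecting the terms $-3\beta|u|^2\psi_\omega + \lambda\psi_\omega + 2\beta(|u|^2u,\psi_\omega)u + (\omega - (\omega,u)u)$. The key point is that this $\tilde g$ now lies in $H^2(\Omega)\cap H^1_0(\Omega)$ rather than merely in $L^2$: the term $\omega - (\omega,u)u$ is $H^2$ by assumption on $\omega$ (using $u \in H^2$ and Lemma \ref{lemma_bounds_u}), and the remaining terms inherit $H^2$-regularity from the products of $u$ (with $|u|^2u \in H^2$ by \eqref{nonlinear_reg}) and $\psi_\omega$, where $\psi_\omega$ itself is $H^2$ by elliptic regularity as already exploited in \eqref{psi_bound}. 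This means I may apply the \emph{higher-order} LOD estimate \eqref{PLOD_error_H3} (rather than \eqref{PLOD_error_H}) to $\psi_\omega$, choosing $\psi = \PLOD(\psi_\omega)$, which yields
\begin{align*}
\inf_{\psi \in \VLOD}\|\psi_\omega - \psi\|_{H^1(\Omega)} \le CH^3 \|\tilde g\|_{H^2(\Omega)} \le CH^3 \|\omega\|_{H^2(\Omega)}.
\end{align*}

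Combining this $H^3$-bound for the dual approximation with the $H^3$-bound for $\|u-\uLOD\|_{H^1(\Omega)}$ from Proposition \ref{thm_error_H1}, and then dividing by $\|\omega\|_{H^2(\Omega)}$ and taking the supremum over $\omega$, produces the claimed $O(H^6)$ rate. The main obstacle I anticipate is the careful verification that $\|\tilde g\|_{H^2(\Omega)} \le C\|\omega\|_{H^2(\Omega)}$ with a constant independent of $H$; this requires bounding each product term in $\tilde g$ using the Sobolev algebra / Hölder estimates of the type already carried out in Lemma \ref{lemma_bounds_u} and estimate \eqref{psi_bound}, and crucially it needs the elliptic estimate $\|\psi_\omega\|_{H^2(\Omega)} \le C\|\omega\|$ (not just $\le C\|\omega\|_{H^2(\Omega)}$, though either suffices) so that the $\psi_\omega$-dependent terms are controlled by $\|\omega\|_{H^2(\Omega)}$. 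Once this regularity bookkeeping is in place, the estimate follows by assembling the pieces exactly as in the $L^2$-proof, but with the sharper LOD approximation order.
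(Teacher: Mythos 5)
Your treatment of the dual approximation error is exactly right and matches the paper: rewriting the dual problem as $a(\psi_\omega,v)=(\tilde g,v)$ with $\tilde g\in H^2(\Omega)\cap H^1_0(\Omega)$ and invoking the higher-order estimate \eqref{PLOD_error_H3} does give $\inf_{\psi\in\VLOD}\|\psi_\omega-\psi\|_{H^1(\Omega)}\le CH^3\|\omega\|_{H^2(\Omega)}$, and this is how the paper handles the corresponding piece of the argument. However, the step where you claim that Theorem \ref{theorem-approximation-properties-ground-states} yields $(\omega,u-\uLOD)\le C\|u-\uLOD\|_{H^1(\Omega)}\inf_{\psi\in\VLOD}\|\psi_\omega-\psi\|_{H^1(\Omega)}$ for \emph{arbitrary} $\omega$ is a genuine gap. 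The abstract $L^2$-estimate is stated (and proved in \cite{CCM10}) only for the special right-hand side $\omega=u-u_\delta$, and the clean Aubin--Nitsche form you want relies on a Galerkin orthogonality that fails here: testing the dual problem with $e_\LOD=u-\uLOD$ and inserting the discrete projection $P_{\VLOD\cap V^\perp_u}\psi_\omega$ leaves behind the residual $\langle(E''(u)-\lambda)e_\LOD,\,P_{\VLOD\cap V^\perp_u}\psi_\omega\rangle$, which is \emph{not} zero because the linearized operator $E''(u)-\lambda$ does not annihilate $e_\LOD$ against discrete test functions. In addition, $e_\LOD\notin V^\perp_u$, so one must account for the correction $(u,e_\LOD)=-\tfrac12\|e_\LOD\|^2$.

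These omitted terms are precisely where the real work lies. The residual term decomposes into an eigenvalue-consistency part $((\lambda-\lambdaLOD)\uLOD,\cdot)$ — which needs the a priori suboptimal bound $|\lambda-\lambdaLOD|\le CH^3$ from \cite{HMP14} together with Proposition \ref{thm_error_L2} to reach $O(H^7)$ — and a nonlinear-consistency part, which is only seen to be $O(H^6)$ after the algebraic identity $2|u|^2(u-\uLOD)+\uLOD(|\uLOD|^2-|u|^2)=(2u+\uLOD)|u-\uLOD|^2$ turns it into a quantity quadratic in the $H^1$-error. Neither of these is controlled by the product $\|u-\uLOD\|_{H^1(\Omega)}\cdot\inf_\psi\|\psi_\omega-\psi\|_{H^1(\Omega)}$; they are separate contributions that happen to also be of order $H^6$ or better. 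Without identifying and estimating them, the proof is incomplete — indeed, if the abstract estimate generalized as you assume, the lemma would be a two-line corollary, whereas it is the technical core of the eigenvalue analysis.
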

\begin{proof}
For $\omega \in H^2(\Omega)\cap H^1_0(\Omega)$, let $\psi_\omega \in V^\perp_u$ be the solution to the following dual problem (cf. \eqref{dual_problem_ext}):
\begin{align}\label{dual_problem_ext_omega}
\langle (E^{\prime\prime}(u) -\lambda) \psi_{\omega},  v \rangle =
2\beta((|u|^2u,\psi_{\omega})u,v) + (\omega - (\omega,u)u,v)
\end{align}
for all $v\in H^1_0(\Omega)$. Letting $e_{\LOD}:=u-\uLOD \in H^1_0(\Omega)$ we first observe with $\| u \|=\| \uLOD \|=1$ that 
\begin{align*}
(u,e_{\LOD}) = \frac{1}{2} \left( \| u - e_{\LOD}\|^2 - \| u \|^2  - \| e_{\LOD}\|^2  \right) = -\frac{1}{2}  \| e_{\LOD}\|^2.
\end{align*}
Hence, testing with $v=e_{\LOD}$ in \eqref{dual_problem_ext_omega} and using the above identity we obtain
	\begin{align}\label{main_eq_dualnorm}
	\begin{split}
	(\omega, e_{\LOD}) &= \langle (E''(u)-\lambda)e_{\LOD},P_{\VLOD \cap V^\perp_u}\psi_\omega \rangle \\
	&\quad+ \langle(E''(u)-\lambda)e_{\LOD},\psi_\omega - P_{\VLOD \cap V^\perp_u}\psi_\omega\rangle 
	\\ &\quad+ \| e_{\LOD} \|^2 (\beta|u|^2u,\psi_\omega) - \frac{1}{2}\|e_{\LOD}\|^2 (u,\omega) =: \mbox{I} + \mbox{II} + \mbox{III} + \mbox{IV},
	\end{split}
	\end{align}%
	where $P_{\VLOD \cap V^\perp_u}$ is the $a(\cdot,\cdot)$-orthogonal projection onto $\VLOD \cap V^\perp_u$. Recalling
	\begin{align*}
	\langle (E''(u)-\lambda)v,w \rangle = a(v,w) + 3 \beta(|u|^2v,w) - \lambda \, (v,w)
	\end{align*}
	and using the variational formulations of the eigenvalue problems for $u$ and $\uLOD$ we obtain for the first term
	\begin{align*}
	\mbox{I} &= ((\lambda-\lambdaLOD)\uLOD,P_{\VLOD\cap V^\perp_u}\psi_\omega) \\&\quad+ \beta(2|u|^2(u-\uLOD)+\uLOD(\uLOD^2-u^2),P_{\VLOD\cap V^\perp_u}\psi_\omega) =: \mbox{I}_1 + \mbox{I}_2.
	\end{align*}
	For $\mbox{I}_1$ we use that $(u,P_{\VLOD\cap V^\perp_u}\psi_\omega) = 0$ and $\| P_{\VLOD\cap V^\perp_u}\psi_\omega \|_{H^1(\Omega)} \le C \| \psi_\omega \|_{H^1(\Omega)} \le C \| \omega \|$ (by $H^1$-stability of $P_{\VLOD\cap V^\perp_u}$) so that
	\begin{align*}
	\mbox{I}_1 &= ((\lambda-\lambdaLOD)(\uLOD-u),P_{\VLOD\cap V^\perp_u}\psi_\omega) \leq |\lambda - \lambdaLOD| \, \|u-\uLOD\| \, \| \omega \|.
	\end{align*}
	Proposition \ref{thm_error_L2} and the sub-optimal estimate $|\lambda-\lambdaLOD|\leq CH^3$ from \cite{HMP14} yield $\mbox{I}_1 \le C H^7 \| \omega \|$.
	For $\mbox{I}_2$ we rely on the equality
	\begin{align*}
	2|u|^2(u-\uLOD)+\uLOD(|\uLOD|^2-|u|^2) = (2 u + \uLOD) |u - \uLOD|^2.
	\end{align*}
	Thus, with the embedding $H^1_0(\Omega) \hookrightarrow L^6(\Omega)$ we have
	\begin{align*}
	\mbox{I}_2 &\leq C((2u + \uLOD) |u - \uLOD|^2,P_{\VLOD\cap V^\perp_u}\psi_\omega) \\
	&\leq C\|2u + \uLOD\|_{L^6(\Omega)} \, \| |u-\uLOD|^2\|_{L^3(\Omega)}\| \, \|P_{\VLOD\cap V^\perp_u}\psi_\omega\|_{L^6(\Omega)} \\
	%&\leq C \|u-\uLOD\|^2_{L^6(\Omega)}\|P_{\VLOD\cap V^\perp_u}\psi_\omega\|_{H^1(\Omega)} \\
	&\leq C \|u-\uLOD\|^2_{H^1(\Omega)}\|\psi_\omega\|_{H^1(\Omega)} \leq CH^6\|\omega\|,
	\end{align*}
	where we have used that 
	$\| 2u + \uLOD\|_{L^6(\Omega)} \le C (\sqrt{\lambda} + \sqrt{\lambda_{\LOD}}) \le C (\sqrt{\lambda} + H^{3/2})$.
	We deduce $\mbox{I} \le CH^6\|\omega\| \le C H^6 \|\omega\|_{H^2(\Omega)}$.
	
	For the second term in \eqref{main_eq_dualnorm} we deduce the bound
	\begin{align*}
	\mbox{II} &\leq C\|u-\uLOD\|_{H^1(\Omega)}\|\psi_\omega - P_{\VLOD\cap V^\perp_u}\psi_\omega\|_{H^1(\Omega)} \\&\leq CH^3\|\psi_\omega - P_{\VLOD\cap V^\perp_u}\psi_\omega\|_{H^1(\Omega)}.
	\end{align*}
	To estimate the right hand side, we first note that (cf. \cite{CCM10,HMP14})
	$$
	\| \psi_{\omega} - P_{\VLOD\cap V^\perp_u}\psi_\omega \|_{H^1(\Omega)}
	\le C \| \psi_{\omega} - P_{\LOD} \psi_\omega \|_{H^1(\Omega)}
	$$
	by using the $H^1$-stability properties of  $P_{\VLOD\cap V^\perp_u}$ and $P_{\LOD}$ together with the fact that $ \psi_{\omega} \in V_u^{\perp}$ (and assuming that $H$ is small enough). Hence, we can apply  Lemma~\ref{PLOD_error} by writing $\psi_\omega$ (with the same argument as in \eqref{dual_problem_ext}) as
	\begin{align*}
	a(\psi_\omega,v) = (\omega - (\omega,u)u,v) + (2\beta(|u|^2u,\psi_\omega)u,v) + (\lambda\psi_\omega,v) - 3\beta(|u|^2\psi_\omega,v).
	\end{align*}
	We conclude with \eqref{PLOD_error_H3} that
	\begin{align*}
	\|\psi_\omega - P_{\VLOD\cap V^\perp_u}\psi_\omega\|_{H^1(\Omega)} &\leq CH^3 \big(\|\omega\|_{H^2(\Omega)} + \|(\omega,u)u\|_{H^2(\Omega)} \\&\quad+ \|(|u|^2u,\psi_\omega)u\|_{H^2(\Omega)} + \|\lambda\psi_\omega\|_{H^2(\Omega)} + \||u|^2\psi_\omega\|_{H^2(\Omega)} \big) \\&=: CH^3\sum_{i=1}^5 \mbox{II}_i
	\end{align*}
	For $\mbox{II}_2$ we use Lemma~\ref{lemma_bounds_u}
	\begin{align*}
	\mbox{II}_2 = |(\omega,u)|\|u\|_{H^2(\Omega)} \leq \|\omega\| \, \|u\| \, \|u\|_{H^2(\Omega)} \leq C\|\omega\|_{H^2(\Omega)}.
	\end{align*}
	For the last terms we use Lemma~\ref{lemma_bounds_u} again together with the regularity bound $\|\psi_\omega\|_{H^2(\Omega)}\leq C\|\omega\|$ (which is obtained analogously as in the proof of Proposition \ref{thm_error_L2}) to conclude
	\begin{align*}
	\mbox{II}_3 + \mbox{II}_4 + \mbox{II}_5 &\leq \||u|^2u\|\|\psi_\omega\|\,\|u\|_{H^2(\Omega)} + \lambda\|\psi_\omega\|_{H^2(\Omega)} + \|u\|_{L^\infty(\Omega)}^2\|\psi_\omega\|_{H^2(\Omega)} \\&\leq C\|\omega\| \leq C\|\omega\|_{H^2(\Omega)}.
	\end{align*}
	We deduce $\mbox{II} \leq CH^6\|\omega\|_{H^2(\Omega)}$.
	
	For the third and fourth term in \eqref{main_eq_dualnorm} we use the error bound in the $L^2$-norm from Proposition~\ref{thm_error_L2}, the regularity of $\psi_\omega$, and Lemma~\ref{lemma_bounds_u} to derive
	\begin{align*}
	\mbox{III} + \mbox{IV} \leq CH^8\||u|^2u\| \, \|\psi_\omega\| + CH^8\|\omega\|_{H^1(\Omega)} \leq CH^8\|\omega\|_{H^2(\Omega)}.
	\end{align*}	
	Combining everything, we obtain $(\omega,u-\uLOD) \leq CH^6\|\omega\|_{H^2(\Omega)}$, which gives
	\begin{align*}
	\|u-\uLOD\|_{H^{-2}} = \sup_{\omega \in H^2\cap H^1_0\setminus \{0\}} \frac{(\omega,u-\uLOD)}{\|\omega\|_{H^2(\Omega)}} \leq CH^6.
	\end{align*}
\end{proof}
With this we are ready to prove the final estimate for the eigenvalue, which also finishes the proof of Theorem \ref{thm_main}.
\begin{proposition}\label{thm_error_eigen}
Assume $V\in L^{\infty}(\Omega)$. Let $\lambda$ and $\lambdaLOD$ be the eigenvalues corresponding to $u$ and $\uLOD$ in \eqref{groundstate-EV-from-energy} and \eqref{LOD_approx} respectively. It holds that
	\begin{align*}
	|\lambda - \lambdaLOD| \leq CH^6.
	\end{align*}
\end{proposition}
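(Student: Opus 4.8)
The plan is to derive an error identity for the eigenvalue difference $\lambda - \lambdaLOD$ and show that every term on the right-hand side is controlled at order $H^6$. Starting from the energy-eigenvalue relation \eqref{groundstate-EV-from-energy} and its LOD analogue, I would write
\begin{align*}
\lambda - \lambdaLOD = 2\bigl(E(u) - E(\uLOD)\bigr) + \frac{\beta}{2}\bigl(\|u\|_{L^4(\Omega)}^4 - \|\uLOD\|_{L^4(\Omega)}^4\bigr).
\end{align*}
The first term is already $O(H^6)$ by Conclusion~\ref{thm_error_energy}, so the whole difficulty is to show the $L^4$-term is also $O(H^6)$. The factorization $\|u\|_{L^4}^4 - \|\uLOD\|_{L^4}^4 = \int_\Omega (|u|^2 + |\uLOD|^2)(|u|^2 - |\uLOD|^2)$ together with $|u|^2 - |\uLOD|^2 = (u+\uLOD)(u-\uLOD)$ suggests that this term naively only gives $\|u-\uLOD\| \sim H^4$, which is two orders short. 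Thus the core of the proof must exploit a cancellation beyond the crude $L^2$-bound.

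The key idea I would pursue is to rewrite the relevant quantity so that the error $e_\LOD = u - \uLOD$ is paired against a smooth test function, allowing the sharp $H^{-2}$-estimate from Lemma~\ref{lemma_dual_error} to enter. Concretely, I expect the eigenvalue error to reduce (after using the weak eigenvalue equations for both $u$ and $\uLOD$, and the $L^2$-normalization identities $\|u\|=\|\uLOD\|=1$ and $(u,e_\LOD) = -\tfrac12\|e_\LOD\|^2$) to an expression of the form $(\omega, e_\LOD)$ plus genuinely higher-order remainders, where $\omega$ is a fixed $H^2$-regular function built from $u$, $V$, $\lambda$ and the nonlinearity, e.g.\ $\omega = \lambda u - Vu - \beta|u|^2 u$ or a closely related combination. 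Since such an $\omega$ lies in $H^2(\Omega)\cap H^1_0(\Omega)$ by Lemma~\ref{lemma_bounds_u}, applying Lemma~\ref{lemma_dual_error} immediately bounds $(\omega, e_\LOD) \le C\|\omega\|_{H^2(\Omega)} H^6$. The remaining quadratic-and-higher terms in $e_\LOD$ (such as those carrying $\|e_\LOD\|^2$) would be estimated by combining the $H^1$-rate $H^3$ and the $L^2$-rate $H^4$ from Propositions~\ref{thm_error_H1} and~\ref{thm_error_L2}, giving at least $H^6$.

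The main obstacle, and the step requiring the most care, is the algebraic manipulation that isolates the leading term as a clean $L^2$-pairing $(\omega, e_\LOD)$ against an $H^2$-function while verifying that all genuinely nonlinear remainders are of order $H^6$ or higher. In particular I would need to track how the cubic nonlinearity $\beta|u|^2u$ interacts with the difference of the two eigenvalue equations: expanding $|u|^2u - |\uLOD|^2\uLOD$ produces terms linear in $e_\LOD$ (the dangerous ones, to be absorbed into $\omega$) and terms quadratic or cubic in $e_\LOD$ (harmless, bounded via the $H^1$- and $L^2$-rates and the embedding $H^1_0(\Omega)\hookrightarrow L^6(\Omega)$). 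The delicate point is ensuring that the linear-in-$e_\LOD$ contributions genuinely assemble into a pairing against a fixed $H^2$-regular function so that Lemma~\ref{lemma_dual_error} is applicable, rather than leaving a residual that only admits the weaker $H^{-1}$- or $L^2$-duality and would cap the rate at $H^3$ or $H^4$. Once that regrouping is verified, the bound $|\lambda - \lambdaLOD| \le CH^6$ follows by collecting all the estimated contributions.
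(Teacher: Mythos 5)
Your proposal follows essentially the same route as the paper: start from the identity $\lambda - \lambdaLOD = 2(E(u)-E(\uLOD)) + \tfrac{\beta}{2}(\|u\|_{L^4(\Omega)}^4 - \|\uLOD\|_{L^4(\Omega)}^4)$, bound the energy term by Conclusion~\ref{thm_error_energy}, and handle the $L^4$-difference by isolating a term linear in $e_\LOD$ paired against an $H^2$-regular function (so that Lemma~\ref{lemma_dual_error} applies) plus remainders quadratic in $e_\LOD$ controlled by the $H^1$-rate. The one step you leave open resolves more simply than you anticipate: the factorization $\|u\|_{L^4}^4 - \|\uLOD\|_{L^4}^4 = ((|u|^2+|\uLOD|^2)(u+\uLOD), u-\uLOD)$ yields directly $4(|u|^2u, u-\uLOD)$ plus terms quadratic in the error, so the fixed test function is just $|u|^2u \in H^2(\Omega)\cap H^1_0(\Omega)$ (Lemma~\ref{lemma_bounds_u}) and no use of the weak eigenvalue equations is needed at this stage.
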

\begin{proof}
	We use the definition of the eigenvalues to estimate
	\begin{align*}
	\|\lambda - \lambdaLOD\| \leq 2|E(u) - E(\uLOD)| + \frac{\beta}{2}|\|u\|^4_{L^4(\Omega)} - \|\uLOD\|^4_{L^4(\Omega)}|,
	\end{align*}
	where the first term is bounded by Proposition~\ref{thm_error_energy}. It remains to bound the second term. We have
	\begin{eqnarray*}
	\lefteqn{ \| \uLOD \|^4_{L^4(\Omega)} - \| u \|^4_{L^4(\Omega)} = ((|u|^2 + |\uLOD|^2)(\uLOD+u),\uLOD-u) }
	\\&=& ((|u|^2 + |\uLOD|^2), |\uLOD-u|^2) + (2u(|u|^2 + |\uLOD|^2),\uLOD-u)
	\\&=& \mbox{I} + \mbox{II}.
	\end{eqnarray*}
	For the first term we use Proposition~\ref{thm_error_H1} to achieve
	\begin{align*}
	\mbox{I} &\leq \| \, |u|^2 + |\uLOD|^2\| \, \|(\uLOD-u)^2\| \leq C\|\uLOD-u\|^2_{L^4(\Omega)} \leq C\|\uLOD-u\|^2_{H^1(\Omega)} \\&\leq CH^6.
	\end{align*}
	The second term splits into
	\begin{align*}
	\mbox{II} &= (2u(|u|^2 + |\uLOD|^2),\uLOD-u) \\
	%&= 4( |u|^2u,\uLOD-u) + 2((|\uLOD|^2-|u|^2)u,\uLOD-u) \\
	&= 4(|u|^2u, \uLOD-u) + 2((\uLOD+u)u,|\uLOD-u|^2) \\
	%&\leq C(\|u^2u\|_{H^2(\Omega)}\|u-\uLOD\|_{H^{-2}} + \|(\uLOD+u)u\| \, \|(u-\uLOD)^2\|\\
        &\leq C(\|u^2u\|_{H^2(\Omega)}\|u-\uLOD\|_{H^{-2}} + \|(\uLOD+u)u\| \, \|u-\uLOD\|^2_{L^4(\Omega)})
	\\&\leq C(\|u^2u\|_{H^2(\Omega)}\|u-\uLOD\|_{H^{-2}} + \|(\uLOD+u)u\| \, \|u-\uLOD\|^2_{H^1(\Omega)}).
	\end{align*}
	We note that by H\"{o}lder's inequality and Sobolev embeddings we have the bound
	\begin{align*}
	\|(\uLOD+u)u\| &\leq \|\uLOD\|_{L^4(\Omega)}\|u\|_{L^4(\Omega)}  + \|u\|_{L^4(\Omega)}^2 \\
	&\leq C \left( \|\uLOD\|_{H^1(\Omega)}\|u\|_{H^1(\Omega)} + \|u\|_{H^1(\Omega)}^2 \right) \leq C.
	\end{align*}
	This relies on the fact that $\|\uLOD\|_{H^1(\Omega)} \leq C $, which can simply be deduced by using the error bound in the $H^1$-norm
	\begin{align*}
	\|\uLOD\|_{H^1(\Omega)}\leq \|\uLOD - u\|_{H^1(\Omega)}  + \|u\|_{H^1(\Omega)} \leq CH^3 + \|u\|_{H^1(\Omega)} \leq C(1+H^3).
	\end{align*}
	Thus, using Lemma~\ref{lemma_dual_error} and Proposition~\ref{thm_error_H1} we deduce $\mbox{II}\leq CH^6$, which completes the proof.
\end{proof}

\section{Numerical Experiments}\label{sec:experiments}
In this section we perform two numerical experiments to validate the theoretical convergence rates for both a smooth and a discontinuous potential $V$. The numerical experiments are performed for the ideal method, without localization of the corrections, to eliminate any potential reduction in convergence rate coming from the size of the localization patches. We emphasize that these experiments are only to verify that the derived rates are indeed optimal for the Gross--Pitaevskii equation and in a practical application the localization should always be implemented. For further details regarding localization we refer to \cite{HMP14} where the method was first suggested. 

\subsection{Smooth potential}\label{subsec:smooth_potential}
For the smooth potential we consider a setup similar to \cite{BD04}. The computational domain is given by the square $\Omega=[-6,6]\times[-6,6]$, the potential is set to the smooth (harmonic) function $V(x,y) = 0.5(x^2 + y^2)$, and the interaction parameter to $\beta=100$. 

The resulting minimizing problem \eqref{LOD_approx} is solved by using the iterative scheme, referred to as normalized gradient flow, presented in \cite{BD04}.

A reference solution is computed on a fine mesh of size $h = 2^{-4}$. The LOD approximations $\uLOD$ are computed for meshes of decreasing size $H=2^{1},2^{0},2^{-1},2^{-2}$ and then compared to the reference solution. The resulting convergence rates for the $H^1$-norm, the $L^2$-norm, the energy, and the eigenvalue are plotted in Figure~\ref{fig:smooth_V}. The plots clearly confirm the convergence orders predicted by Theorem \ref{thm_main}.

\begin{figure}
	\centering
	\begin{subfigure}[b]{0.5\textwidth}
		\includegraphics[width=\textwidth]{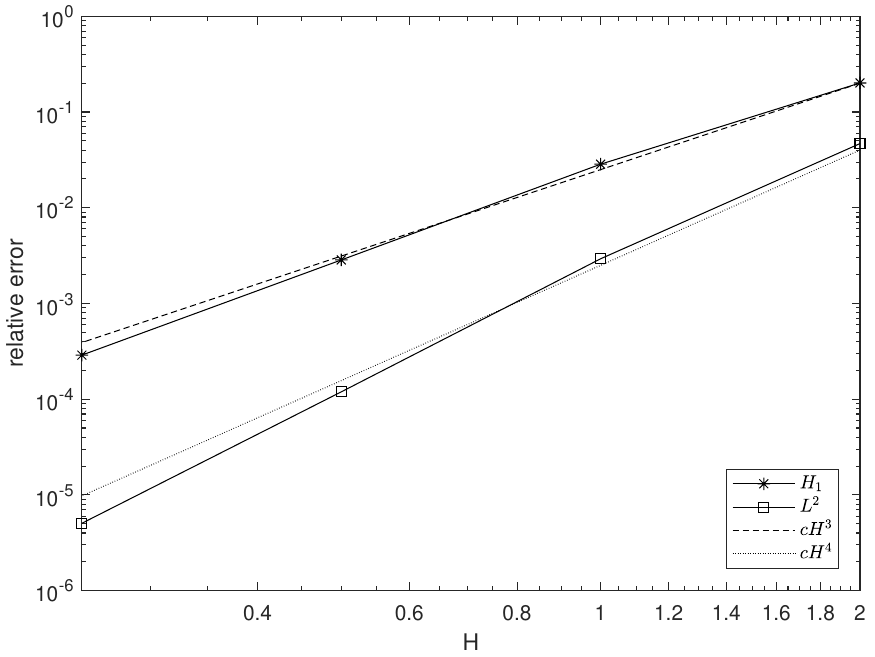}
		\caption{$H^1$- and $L^2$-norm}
	\end{subfigure}~
	\begin{subfigure}[b]{0.5\textwidth}
		\includegraphics[width=\textwidth]{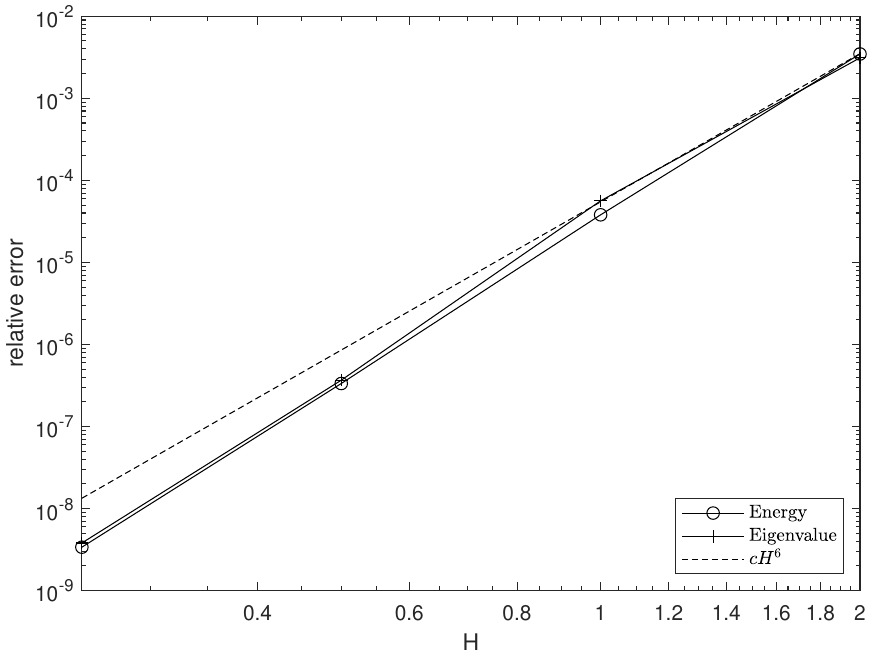}
		\caption{Energy and eigenvalue}
	\end{subfigure}
	\caption{Relative errors for $\uLOD$, $E(\uLOD)$, and $\lambdaLOD$ for the smooth potential used in the numerical experiment in Section \ref{subsec:smooth_potential}.} \label{fig:smooth_V}
\end{figure}

\begin{figure}
	\centering
	\begin{subfigure}[b]{0.45\textwidth}
		\includegraphics[width=\textwidth]{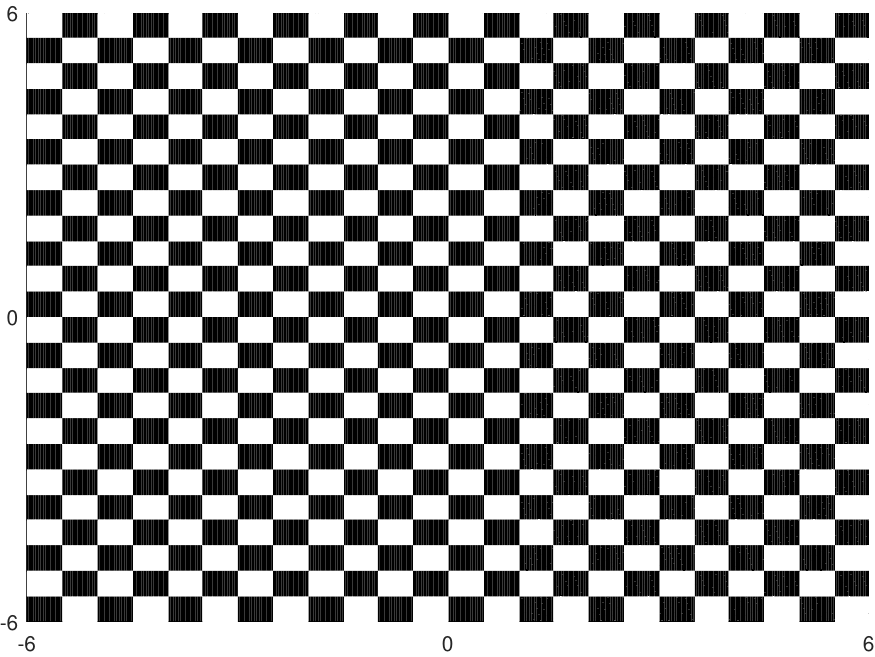}
		\caption{A realization of the checkerboard potential.}
	\end{subfigure}~~
	\begin{subfigure}[b]{0.5\textwidth}
		\includegraphics[width=\textwidth]{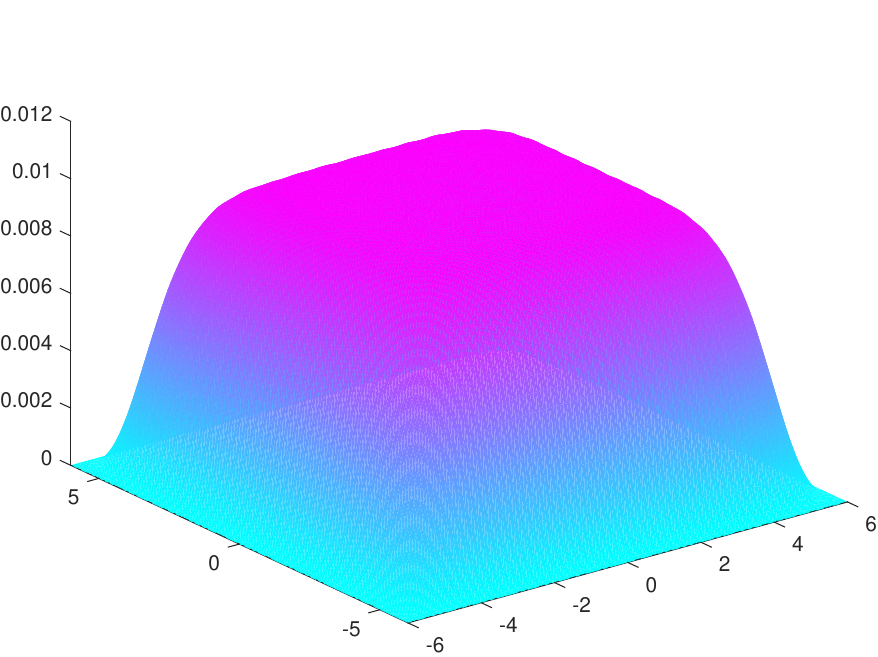}
		\caption{The corresponding ground state $|u|^2$ computed on the reference mesh.}
	\end{subfigure}
	\caption{A plot of the checkerboard potential and the corresponding ground state for the numerical experiment in Section \ref{subsec:disc_potential}.} 
	\label{fig:checkerboard_and_groundstate}
\end{figure}

\begin{figure}
	\centering
	\begin{subfigure}[b]{0.5\textwidth}
		\includegraphics[width=\textwidth]{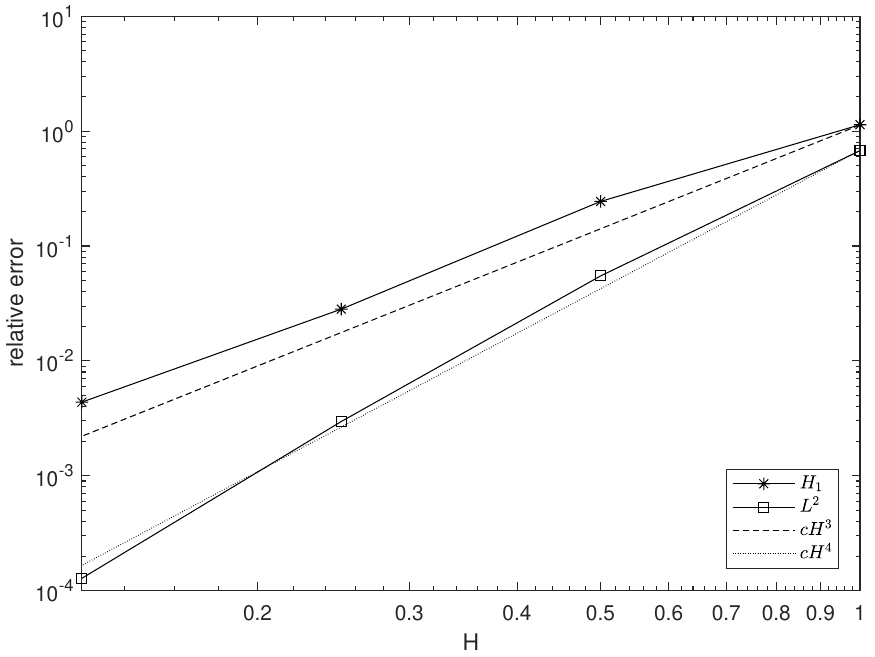}
		\caption{$H^1$- and $L^2$-norm}
	\end{subfigure}~
	\begin{subfigure}[b]{0.5\textwidth}
		\includegraphics[width=\textwidth]{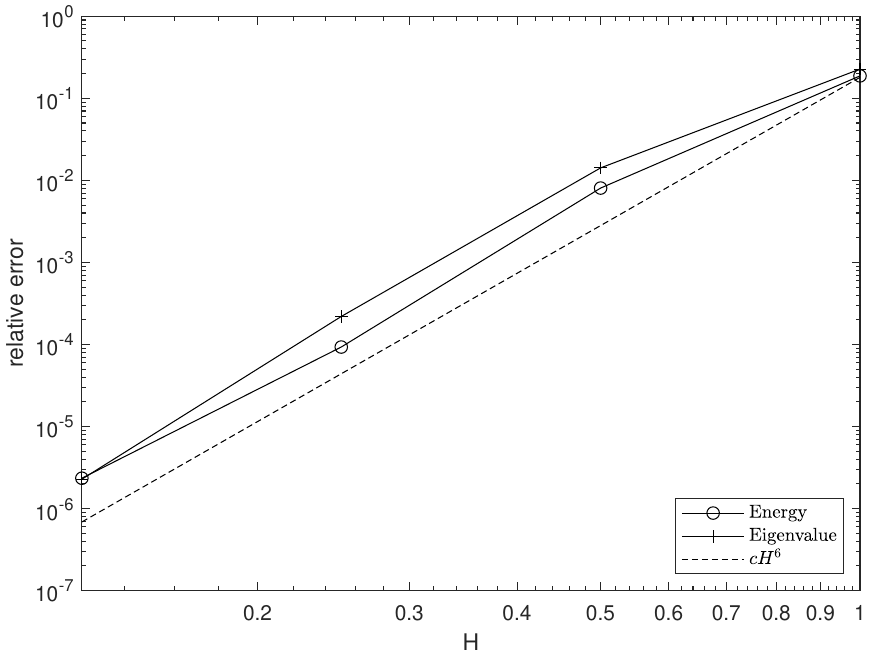}
		\caption{Energy and eigenvalue}
	\end{subfigure}
	\caption{Relative errors for $\uLOD$, $E(\uLOD)$, and $\lambdaLOD$ for the discontinuous potential used in Section \ref{subsec:disc_potential}.} \label{fig:disc_V}
\end{figure}

\subsection{Discontinuous potential}\label{subsec:disc_potential}

For the discontinuous potential the setting remains roughly the same as for the smooth potential in Section~\ref{subsec:smooth_potential} with $\Omega=[-6,6]\times[-6,6]$ and $\beta=100$. 

The potential is set to a checkerboard with squares of size $2^{-2}$ which results in $24\times 24$ squares on the domain. The values at the squares alternate between $0$ and $1$, see Figure~\ref{fig:checkerboard_and_groundstate}.

A reference solution is computed on a mesh of size $h = 2^{-5}$. The triangles in the mesh are aligned with the discontinuities in the potential, so that the potential can be computed exactly on each triangle to avoid additional numerical errors. The LOD approximations $\uLOD$ are computed for meshes of decreasing size $H=2^{0},2^{-1},2^{-2},2^{-3}$ and then compared to the reference solution. The resulting convergence rates for the $H^1$-norm, the $L^2$-norm, the energy, and the eigenvalue are plotted in Figure~\ref{fig:disc_V}. Again, the plots clearly confirm the predicted convergence orders.

\medskip
$\\$
{\bf Acknowledgements.}
The authors would like to thank the anonymous reviewers for their insightful comments that helped to improve the paper.

\end{document}